\documentclass[11pt]{article}

\usepackage[a4paper,margin=1in]{geometry}
\usepackage{amsmath,amssymb,amsthm,amsfonts,mathtools}
\usepackage{mathrsfs}
\usepackage{enumitem}
\usepackage{hyperref}
\usepackage[nameinlink,capitalise]{cleveref}
\usepackage{microtype}

\allowdisplaybreaks

\title{Finiteness of Equidistant Affine Subspaces}
\author{Jia Li}
\date{August 12, 2026}

\newtheorem{theorem}{Theorem}[section]
\newtheorem{proposition}[theorem]{Proposition}
\newtheorem{lemma}[theorem]{Lemma}
\newtheorem{corollary}[theorem]{Corollary}

\newtheorem{question}[theorem]{Question}

\theoremstyle{definition}
\newtheorem{definition}[theorem]{Definition}
\newtheorem{remark}[theorem]{Remark}
\newtheorem{example}[theorem]{Example}

\newcommand{\R}{\mathbb R}

\newcommand{\Gr}{\operatorname{Gr}}
\newcommand{\Graff}{\operatorname{Graff}}
\newcommand{\dist}{\operatorname{dist}}
\newcommand{\rank}{\operatorname{rank}}
\newcommand{\Span}{\operatorname{span}}
\newcommand{\col}{\operatorname{col}}
\newcommand{\Mat}{\operatorname{Mat}}

\begin{document}

\maketitle

\begin{abstract}
Let \(0\leqslant k<n\), and let \(\mathcal N(k,n)\) denote the
supremum of the cardinalities of finite families of affine
\(k\)-planes in \(\R^n\) whose pairwise Euclidean separation
distances are all equal to one. For \(k=1\), the finiteness of
\(\mathcal N(1,n)\) was recently established by Solymosi and Zahl.
We extend this result to affine subspaces of arbitrary dimension and
prove the fully explicit bound
\[
\mathcal N(k,n)
\leqslant
2^{\,2^{\,10(k+1)(n-k)}}.
\]

The main difficulty in higher dimensions is that the rational distance
formula degenerates along several possible rank strata of
\(\dim(U+V)\), rather than only along the parallel locus. For each
fixed rank \(r\), we replace the choice of a nonvanishing Gram minor
by a global sum-of-squares expression involving all minors. This
produces a polynomial relation \(H_r(x,y)=0\) which encodes both the
unit-distance equation and an upper-rank condition, while also
vanishing on the diagonal.

We propagate this relation by means of the restricted Zariski closure
and decompose the resulting algebraic set into an explicitly bounded number of connected Nash submanifolds. An analytic diagonal
approximation argument shows that, within each Nash piece, the maximal
possible value of \(\dim(U_x+U_y)\) must decrease. Iterating this rank
descent over all possible ranks and combining the finitely many
standard affine Grassmann charts yields the stated double-exponential
upper bound.
\end{abstract}

\noindent{\bf Keywords}: Equidistant affine subspaces; affine Grassmannians;
restricted Zariski closure; semialgebraic geometry; Nash manifolds;
Ramsey theory.

\noindent\textbf{2020 Mathematics Subject Classification.} Primary 52C10; Secondary 14P10, 14M15, 05D10.


\section{Introduction}

Let $0\leqslant k<n$. An affine $k$-plane in $\R^n$ is a set of the form
\[
L=a+U,
\]
where $U\subset\R^n$ is a $k$-dimensional linear subspace and $a\in\R^n$. For two affine $k$-planes $L_1,L_2$, define
\[
d(L_1,L_2)
:=
\inf_{x\in L_1,\,y\in L_2}\|x-y\|.
\]

Let $\Graff(k,n)$ denote the moduli space of affine $k$-planes in $\R^n$. We are interested in finite families of affine \(k\)-planes whose pairwise
distances are all equal to one.

\begin{question}[Generalized Littlewood seven-cylinder problem]
Let \(0\leqslant k<n\). Define
\[
\mathcal{N}(k,n)
:=
\sup
\left\{
|\mathcal{L}|
\;\middle|\;
\begin{array}{l}
\mathcal{L}=\{L_1,\cdots,L_m\}\subset\Graff(k,n),\text{with}\ d(L_i,L_j)=1,\\[2mm]
\text{for all distinct}\ i,j\in\{1,2,\cdots,m\}.\end{array}
\right\}
\in
\mathbb{Z}_{\geqslant 0}\cup\{+\infty\}.
\]
Determine the value of \(\mathcal{N}(k,n)\).
\end{question}

This problem may be viewed as a natural higher-dimensional analogue of the
classical equilateral set problem. The case \((k,n)=(1,3)\) is the classical Littlewood seven-cylinder problem: it asks for the maximum number of lines in \(\R^3\) whose pairwise distances are all equal to \(1\). Equivalently, it asks for the maximum number of congruent infinite cylinders that can be arranged in three-dimensional Euclidean space so that every pair touches.

The known three-dimensional line case has recently seen substantial progress. Below we list the main timeline for this issue.
\begin{itemize}
    \item 1968: John E. Littlewood \cite{Littlewood} raised this question.

    \item 2005: Andr\'as Bezdek \cite{Bezdek} proved the first finite upper bound $\mathcal{N}(1,3)\leqslant24$.

    \item 2015: S. Boz\'oki, T.-L.Lee and L. R\'onyai \cite{BLR} constructed seven mutually touching infinite cylinders, showing $\mathcal{N}(1,3)\geqslant7$.

    \item 2025(June): Junnosuke Koizumi \cite{Koizumi} introduced a chirality graph and a signature \((3,3)\) linear-algebraic obstruction, proved that $\mathcal{N}(1,3)\leqslant18$.

    \item 2025(October): T. Dillon, J. Koizumi and S. Luo \cite{DKL} refined Koizumi's approach via additional forbidden graphs and Ramsey theory to obtain $\mathcal{N}(1,3)\leqslant10$.

    \item 2026(June): H\"ofer \cite{Hoefer} has shown that there are at most nine lines in $\R^3$ with pairwise distance one, so \(\mathcal{N}(1,3)\leqslant 9\).
\end{itemize}

But for general cases, we only know the following situations

\begin{itemize}
    \item $\mathcal{N}(0,n)=n+1$.

    \item $\mathcal{N}(n-1,n)=2$.

    \item 2025(December): Solymosi and Zahl \cite{SZ} proved that $\mathcal{N}(1,n)\leqslant4n\cdot 7^{2n-3}$ via methods from real algebraic geometry.
\end{itemize}

\textbf{To the best of our knowledge, prior to the present work, it was not known whether $\mathcal{N}(k,n)$ is finite for general $1<k<n-1$.}\\

The purpose of this paper is to prove that $\mathcal{N}(k,n)$ is finite for every pair $(k,n)$ with $0\leqslant k<n$.

For $k>1$, a direct extension of the line argument meets a new obstruction. If
\[
L_1=a_1+U_1,
\qquad
L_2=a_2+U_2,
\]
then the distance depends on the subspace $U_1+U_2$. In local coordinates, the usual Gram-matrix formula is rational only after choosing a nonvanishing minor, and the denominator degenerates whenever the rank of $U_1+U_2$ drops. For lines, the only such degeneration is parallelism. For higher-dimensional subspaces, several distinct ranks may occur.

The main observation of this paper is that this rank degeneracy can be handled uniformly. For a matrix $C$, let $\Delta_j(C)$ denote the sum of the squares of all $j\times j$ minors of $C$. If $A$ is a matrix whose columns span $U_1+U_2$, $w$ is a displacement vector between the two affine planes, and $\rank A=r$, then
\[
d(L_1,L_2)^2
=
\frac{\Delta_{r+1}([A,w])}{\Delta_r(A)}.
\]
This formula avoids choosing a particular nonzero Gram minor. For each fixed $r$ we therefore obtain a single polynomial relation $H_r(x,y)=0$ which contains the rank-$r$ unit-distance relation and also vanishes on the diagonal.

The second ingredient is the restricted Zariski closure of Solymosi and Zahl. If $X$ is a finite fixed-rank unit-distance configuration in one affine Grassmann chart, then $H_r$ vanishes on $X\times X$, hence it vanishes on the corresponding restricted closure $\overline X^{\,H_r}\times \overline X^{\,H_r}$. The variety $\overline X^{\,H_r}$ has uniformly bounded algebraic complexity. We refine $\overline X^{\,H_r}$ into a uniformly bounded number of connected Nash pieces. If one piece contained two points of $X$, then the rank-$r$ locus would be dense near the diagonal inside the square of that piece. This would produce pairs of affine planes arbitrarily close in the chart topology but still at distance one, which is impossible because the chart contains a distinguished point on each affine plane whose Euclidean distance tends to zero. Thus each piece contains at most one point of $X$.

Finally, in an arbitrary equidistant configuration, we color each pair by the integer
\[
\dim(U_i+U_j).
\]
There are only finitely many colors. Ramsey's theorem gives a large monochromatic subconfiguration. The color $r=k$ is the parallel case and is bounded by the ordinary equilateral-set bound in $\R^{n-k}$. Every color $r>k$ is bounded by the fixed-rank argument above.

To obtain an explicit upper bound for \(\mathcal{N}(k,n)\), we replace
the Ramsey step by a rank-descent argument.  For integer \(\rho\ 
(k\leqslant\rho\leqslant\min(2k,n-1))\), we consider configurations satisfying
\[
\dim(U_x+U_y)\leqslant\rho.
\]
The polynomial \(H_\rho\) vanishes not only on pairs of exact rank
\(\rho\), but also automatically on all pairs of smaller rank. After
passing to the restricted closure and decomposing it into connected
Nash pieces, the preceding diagonal argument shows that, inside each
piece, the maximal possible rank decreases from \(\rho\) to
\(\rho-1\). Iterating this descent until the parallel case is reached
gives a product bound involving the complexities of the relevant
Nash decompositions. Explicit estimates of Gabrielov-Vorobjov and
Basu-Pollack-Roy then yield our main result:
\begin{theorem}[Main theorem]
\label{thm:main}
For every pair of integers \(0\leqslant k<n\), one has
\[
\mathcal N(k,n)
\leqslant
2^{\,2^{\,10(k+1)(n-k)}}.
\]
In particular,
\[
\mathcal N(k,n)<\infty.
\]
\end{theorem}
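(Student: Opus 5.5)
The proof follows the outline in the introduction, organized as a rank descent. First, the parametrization. I will cover $\Graff(k,n)$ by the $\binom{n}{k}$ standard affine charts. In the chart indexed by a $k$-subset $I$, a plane $L$ is written as $a+\col(P)$, where $P\in\Mat_{n\times k}$ has the identity in rows $I$ and $a\in\R^n$ has $a_I=0$. Writing $x=(P,a)\in\R^{N}$ with $N=(k+1)(n-k)$, this point $a$ is the distinguished point of $L$, and the map $x\mapsto a$ is continuous. For two points $x,y$ of the chart, set $A=[P_x,P_y]$ and $w=a_y-a_x$. Using the Cauchy--Binet identity $\Delta_j(C)=$ sum of the principal $j\times j$ minors of $C^TC$, I will check that when $\rank A=r$,
\[
d(L_x,L_y)^2=\Delta_{r+1}([A,w])/\Delta_r(A).
\]
I then define
\[
H_\rho(x,y)=\Delta_{\rho+1}([A,w])-\Delta_\rho(A)+\Delta_{\rho+1}(A)^2 .
\]
A sum-of-squares refinement may be needed to make the vanishing pattern clean. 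The key point is that on pairs with $\rank A\le\rho$ the term $\Delta_{\rho+1}(A)$ vanishes. Hence on such a pair, $H_\rho=0$ holds exactly when either $\rank A=\rho$ and $d=1$, or $\rank A<\rho$, in which case $H_\rho$ reduces to $\Delta_{\rho+1}([A,w])$. I have to arrange the construction so that $H_\rho$ vanishes on the diagonal $x=y$ and on every unit-distance pair of rank at most $\rho$. Getting the lower-rank case right may force a product $\prod_{r\le\rho}$ of the exact-rank relations instead of a single expression. Either way the degree stays $O(\rho)$.

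Second, the descent. Let $X$ be a unit-distance configuration in one chart with all pairwise ranks at most $\rho$. Since $H_\rho$ vanishes on $X\times X$, it also vanishes on $Z\times Z$, where $Z=\overline X^{\,H_\rho}$ is the restricted Zariski closure of Solymosi--Zahl. I take from that earlier framework a bound on the degree of $Z$ that depends only on $N$ and $\deg H_\rho$. Next I decompose $Z$ into connected Nash submanifolds $M_1,\dots,M_s$. The number $s$ is bounded by the Gabrielov--Vorobjov and Basu--Pollack--Roy estimates, roughly $(\deg)^{2^{O(N)}}$.

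The main obstacle is the claim that for each piece $M_i$, the pairs in $X\cap M_i$ have rank at most $\rho-1$. My plan for this step is as follows. Suppose two points of $X\cap M_i$ have rank exactly $\rho$. The set $\{\rank A(x,y)=\rho\}\cap(M_i\times M_i)$ is then a nonempty open subset of the irreducible (connected Nash) set $M_i\times M_i$, because the opposite condition is cut out by analytic equations. By analyticity it is dense, so there are points $(x,y)$ with $\rank=\rho$ arbitrarily close to the diagonal. On that dense set, $H_\rho=0$ forces $d(L_x,L_y)=1$. But the distance is at most $\|a_x-a_y\|\to0$, a contradiction.

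Third, I iterate from $\rho=\min(2k,n-1)$ down to $\rho=k$. The parallel case $\rho=k$ amounts to an equilateral set in $\R^{n-k}$ of side $1$, so it contains at most $n-k+1$ points. Multiplying the piece counts at each level and the $\binom nk$ charts gives
\[
|X|\le\binom nk (n-k+1)\prod_\rho s_\rho .
\]
With $k+1$ levels, each $s_\rho\le 2^{2^{cN}}$, the product is absorbed into $2^{2^{10N}}$. Routine bookkeeping then checks that the constant $10$ suffices.
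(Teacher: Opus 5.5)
Your outline follows the paper's: standard charts with the distinguished point $a$, the Gram--minor identity, restricted closure, connected Nash pieces, the diagonal argument, and descent to the parallel case. The step you call the main obstacle, however, is not justified for the polynomial you write down.

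Put $F_\rho:=\Delta_{\rho+1}([A,w])-\Delta_\rho(A)$ and $R_\rho:=\Delta_{\rho+1}(A)$, so your $H_\rho$ is $F_\rho+R_\rho^2$. Its zeros are correct among pairs \emph{already known} to satisfy $\rank A\le\rho$. In the descent step, though, the points $(u,v)\in M_i\times M_i$ near the diagonal are arbitrary points of $Z\times Z$, and all you know about them is $H_\rho(u,v)=0$. Since $F_\rho$ can be negative, this does not force $R_\rho(u,v)=0$. Concretely, take $k=2$, $n=4$, $\rho=3$, $X=0$, $Y=\epsilon I_2$. Then $\rank A=4$, and one computes $H_3=2\epsilon^2\|c-b\|^2-4\epsilon^2-\epsilon^4+\epsilon^8$, which vanishes for suitable $c-b$.

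So you do not know $\rank A\le\rho$ on $M_i\times M_i$. Your claim that $\{\rank A=\rho\}\cap(M_i\times M_i)$ is open ``because the opposite condition is cut out by analytic equations'' is therefore unjustified: the complement contains the open condition $\rank A>\rho$. For the same reason, the inference ``$H_\rho=0$ forces $d=1$'' on that set is unavailable. The sum-of-squares form $H_\rho=F_\rho^2+R_\rho^2$, which you mention only as a possible refinement, is exactly what is needed and what the paper uses. With it, $R_\rho\equiv0$ on $Z\times Z$, so there $\{\rank A=\rho\}=\{\Delta_\rho(A)>0\}$. This set is open, nonempty, and dense in $M_i\times M_i$ by the identity theorem, and on it $F_\rho=0$ gives $d=1$.

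In the other direction, no product over $r\le\rho$ is needed. If $\rank A\le\rho-1$, then $\rank[A,w]\le\rank A+1\le\rho$, so $\Delta_{\rho+1}([A,w])=\Delta_\rho(A)=\Delta_{\rho+1}(A)=0$. Hence $H_\rho$ vanishes on every lower-rank pair regardless of distance. Your product fallback would also have degree $\sum_{r\le\rho}4(r+1)$, quadratic rather than $O(\rho)$.

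Second, the constant $10$ is asserted rather than derived. An estimate of the form ``$(\deg)^{2^{O(N)}}$'' only gives $2^{2^{cN}}$ for an unspecified $c$, and whether $c$ fits under $10$ depends on the actual Gabrielov--Vorobjov parameters. The paper proceeds in four steps:
\begin{enumerate}[label=\textnormal{(\arabic*)}]
\item It reduces $Z=V(P_1,\dots,P_s)$, which has at most $\binom{M+D_\rho}{D_\rho}$ slices of degree $\le D_\rho=4(\rho+1)$, to the single equation $\sum P_i^2=0$ of degree $\beta_\rho=8(\rho+1)$. The case $s=0$ is treated separately.
\item It uses that all degrees and derivative orders in their algorithm are at most $E_\rho=[\beta_\rho(\beta_\rho+1)]^{24^M}$.
\item It uses that there are at most $2^{M+1}E_\rho^{M^2}$ strata, each described with at most $2^M$ strict inequalities of degree at most $2^M(\beta_\rho+1)E_\rho^{M-2}$.
\item It bounds the components of each stratum by Basu--Pollack--Roy.
\end{enumerate}
This yields $\log_2E_\rho\le48^M\le2^{6M}$ and $\Sigma_\rho\le2^{2^{8M}}$. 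Only then does the product over at most $M$ levels, times $\binom nk(n-k+1)\le2^{2M}$, fit under $2^{2^{10M}}$. Without this bookkeeping your argument gives finiteness and some double-exponential bound, but not the stated exponent.
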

The proof is effective in principle. It yields an upper bound that depends only on $k$ and $n$. We do not attempt to optimize the resulting numerical constant.

The paper is organized as follows. In Section~2 we introduce standard
coordinates on the affine Grassmannian. Section~3 establishes the
Gram--minor distance identity. In Section~4 we construct the
polynomials \(H_r\). Sections~5 and~6 develop the restricted Zariski
closure and the required Nash decomposition. Section~7 proves the
fixed-rank bound, and Section~8 treats the parallel case. Finally,
Section~9 gives both the Ramsey reduction and the rank-descent argument
leading to the explicit upper bound.

\section{The affine Grassmannian and standard charts}

Let $\Gr(k,n)$ denote the moduli space of $k$-dimensional linear subspace in $\R^n$.

\begin{proposition}\label{prop:affine-grassmann}
The affine Grassmannian $\Graff(k,n)$ is a smooth real algebraic manifold of dimension
\[
M=(k+1)(n-k).
\]
\end{proposition}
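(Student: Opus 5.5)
We plan to realize $\Graff(k,n)$ concretely and cover it by explicit charts. Identify an affine $k$-plane $L=a+U$ with the pair $(U,a^\perp)$, where $a^\perp$ is the unique point of $L$ closest to the origin. Thus $a^\perp\in U^\perp$, and $L\mapsto(U,a^\perp)$ is a bijection. This identifies $\Graff(k,n)$ with the total space of the tautological quotient bundle
\[
Q=\{(U,v)\in\Gr(k,n)\times\R^n : v\perp U\}.
\]
Equivalently, using orthogonal projections $P_U$, we get an embedding
\[
\Graff(k,n)\hookrightarrow \{(P,v)\in\Mat_{n\times n}(\R)\times\R^n : P^2=P,\ P^T=P,\ \operatorname{tr}P=k,\ Pv=0\}.
\]
Every condition here is polynomial, so the image is a real algebraic subset of $\R^{n^2+n}$. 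This gives the algebraic structure.

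For smoothness and the dimension count, we use standard charts. For each $k$-subset $I\subset\{1,\dots,n\}$, let $J$ be its complement. Consider the subspaces $U$ that are graphs over the coordinate plane $\R^I$, namely $U=\col\begin{pmatrix} I_k\\ X\end{pmatrix}$ after permuting coordinates, with $X\in\Mat_{(n-k)\times k}$. For $U$ in this chart, every affine plane $a+U$ meets the complementary coordinate subspace $\R^J$ in exactly one point $b\in\R^{n-k}$. The chart map is
\[
(X,b)\longmapsto \begin{pmatrix}0\\ b\end{pmatrix}+\col\begin{pmatrix} I_k\\ X\end{pmatrix}.
\]
It is a bijection from $\R^{k(n-k)}\times\R^{n-k}=\R^{(k+1)(n-k)}$ onto the open set $\Graff_I$ of planes whose direction is transverse to $\R^J$. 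These open sets cover $\Graff(k,n)$, since every $k$-dimensional $U$ has some $I$ with $U\cap\R^J=0$.

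Next we check that the transition maps between charts $I$ and $I'$ are rational functions with nonvanishing denominators on the overlaps. On the overlap, the new $X'$ is obtained by inverting the $k\times k$ submatrix of $\begin{pmatrix} I_k\\ X\end{pmatrix}$ indexed by $I'$ and multiplying, which is the usual Grassmannian transition with denominator a $k\times k$ minor. The new $b'$ is obtained by solving the linear system that intersects the plane with $\R^{J'}$, which has the same denominator. This yields a smooth (indeed Nash/algebraic) atlas of dimension $(k+1)(n-k)$. We also check that it is compatible with the embedded algebraic model. In chart coordinates, $P_U$ and $a^\perp$ are rational in $(X,b)$ with denominator $\det(I_k+X^TX)>0$. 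Conversely, $(X,b)$ is recovered rationally from $(P,v)$ on the corresponding open subset.

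The main obstacle is the compatibility step. We must verify that the embedded set is a smooth submanifold of the expected dimension, and not merely a set bijective with a manifold. We would handle this by noting that the chart maps composed with the embedding are injective immersions with rational inverses, so the embedding is a homeomorphism onto its image and the image is locally a graph. Alternatively, we can cite the standard facts that $\Gr(k,n)$ is a smooth algebraic variety of dimension $k(n-k)$ and that $Q$ is an algebraic vector bundle of rank $n-k$ over it. Then $\dim=k(n-k)+(n-k)=(k+1)(n-k)=M$, which gives the dimension in Proposition~\ref{prop:affine-grassmann}.
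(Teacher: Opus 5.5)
Your proposal is correct, and its core is the paper's proof. The paper likewise writes each affine $k$-plane uniquely as $a+U$ with $a\in U^\perp$, identifies $\Graff(k,n)$ with the total space of the orthogonal-complement bundle over $\Gr(k,n)$, and reads off $M=k(n-k)+(n-k)$. For smoothness it simply cites that $\Gr(k,n)$ and this bundle are algebraic, which is exactly your closing ``alternatively'' sentence.

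What you add is a self-contained check of that cited fact. You use the model $\{(P,v): P^2=P=P^T,\ \operatorname{tr}P=k,\ Pv=0\}\subset\R^{n^2+n}$, which is indeed the whole image: a symmetric idempotent is the orthogonal projection onto its range, and its rank equals its trace. You combine it with the $(X,b)$ charts, which the paper introduces only after the proposition.

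Your compatibility step works as you describe. Write $P_{II},P_{JI}$ for the blocks of $P$ indexed by $I\times I$ and $J\times I$. On the ambient open set $\det P_{II}\neq0$, the rational map $(P,v)\mapsto\bigl(P_{JI}P_{II}^{-1},\,v_J-P_{JI}P_{II}^{-1}v_I\bigr)$ is a left inverse of the chart. So each chart is a smooth embedding onto the relatively open piece $\{\det P_{II}\neq0\}$ of the algebraic set. This also supplies the Hausdorff and second-countability properties that an atlas argument alone would leave unaddressed.

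The paper's route is shorter but rests entirely on the cited standard facts; yours proves them in this case.
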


\begin{proof}
Every affine $k$-plane $L$ can be written uniquely in the form
\[
L=a+U,
\qquad
U\in\Gr(k,n),
\qquad
a\in U^\perp.
\]
Indeed, starting from any representation $L=a_0+U$, decompose
\[
a_0=a_U+a_\perp,
\qquad
a_U\in U,
\quad
a_\perp\in U^\perp.
\]
Then $L=a_\perp+U$, and uniqueness follows from $U\cap U^\perp=\{0\}$.

Thus $\Graff(k,n)$ is canonically the total space of the orthogonal-complement bundle
\[
\gamma_k^\perp
\longrightarrow
\Gr(k,n),
\]
whose fiber over $U$ is $U^\perp$. Since
\[
\dim\Gr(k,n)=k(n-k)
\]
and $\gamma_k^\perp$ has rank $n-k$, we obtain
\[
\dim\Graff(k,n)
=
k(n-k)+(n-k)
=
(k+1)(n-k).
\]
The Grassmannian and the orthogonal-complement bundle are algebraic, hence their total space is a smooth real algebraic manifold.
\end{proof}

We next recall the standard affine charts. Let
\[
I\subset\{1,\cdots,n\},
\qquad |I|=k.
\]
After permuting the coordinates, it is enough to consider
\[
I=\{1,\cdots,k\}.
\]
Write
\[
\R^n=\R^k\oplus\R^{n-k}.
\]
On the Grassmann chart consisting of $k$-planes transverse to $\{0\}\oplus\R^{n-k}$, every direction subspace is the graph of a unique linear map
\[
X:\R^k\to\R^{n-k}.
\]
Thus
\[
U_X
=
\left\{
\binom{u}{Xu}\bigg|u\in\R^k
\right\}.
\]
Every affine $k$-plane whose direction lies in this chart can be uniquely written as
\[
L_{X,b}
=
\left\{
\binom{u}{Xu+b}\bigg|u\in\R^k
\right\},
\qquad
X\in\Mat_{n-k,k}(\R),
\quad
b\in\R^{n-k}.
\]
Hence a standard affine Grassmann chart is identified with
\[
\R^M,
\qquad
M=(k+1)(n-k).
\]

There are exactly \(
\binom nk
\) standard Grassmann charts of this form, corresponding to the nonvanishing Pl\"ucker coordinates indexed by the $k$-subsets of $\{1,\cdots,n\}$. Their lifts cover $\Graff(k,n)$.

For later use, define
\[
B_X
:=
\begin{pmatrix}
I_k\\
X
\end{pmatrix}
\in\Mat_{n,k}(\R).
\]
Then
\[
U_X=\col(B_X).
\]
Moreover,
\[
p_{X,b}
:=
\binom{0}{b}
\in L_{X,b}.
\]
The point $p_{X,b}$ is not, in general, the orthogonal representative of the affine plane, but it depends polynomially on the chart coordinates and will be useful in the diagonal argument.

\section{Distance between affine subspaces}

We begin with the basic coordinate-free formula.

\begin{lemma}[Projection formula]\label{lem:projection}
Let
\[
L_1=a_1+U_1,
\qquad
L_2=a_2+U_2
\]
be affine subspaces of $\R^n$. Then
\[
d(L_1,L_2)
=
\dist(a_2-a_1,U_1+U_2)
=
\left\|\mathrm{Proj}_{(U_1+U_2)^\perp}(a_2-a_1)\right\|.
\]
\end{lemma}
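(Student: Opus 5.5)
We reduce the infimum over pairs of points to a single distance from a vector to a subspace. Set $W:=U_1+U_2$ and $w:=a_2-a_1$. Every $x\in L_1$ has the form $x=a_1+u_1$ with $u_1\in U_1$, and every $y\in L_2$ has the form $y=a_2+u_2$ with $u_2\in U_2$. Hence
\[
y-x=w+(u_2-u_1).
\]
As $(u_1,u_2)$ ranges over $U_1\times U_2$, the difference $u_2-u_1$ ranges over all of $U_1+U_2=W$. This uses only that $U_1$ is a linear subspace, so $-U_1=U_1$. It follows that
\[
\{\,y-x : x\in L_1,\ y\in L_2\,\}=w+W,
\]
and therefore
\[
d(L_1,L_2)=\inf_{z\in W}\|w+z\|=\inf_{z\in W}\|w-z\|=\dist(w,W).
\]

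For the second equality we use the orthogonal decomposition $\R^n=W\oplus W^\perp$. This holds because $W$ is a finite-dimensional linear subspace, hence closed. Write $w=w_W+w_\perp$ with $w_W\in W$ and $w_\perp=\mathrm{Proj}_{W^\perp}(w)$. For any $z\in W$, the Pythagorean identity gives
\[
\|w-z\|^2=\|w_W-z\|^2+\|w_\perp\|^2\geqslant\|w_\perp\|^2,
\]
with equality at $z=w_W$. So the infimum is attained and equals $\|w_\perp\|$. This also shows that the infimum defining $d(L_1,L_2)$ is in fact a minimum.

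No step here is a real obstacle. The only point needing care is the surjectivity of $(u_1,u_2)\mapsto u_2-u_1$ onto $U_1+U_2$, which follows from the sign symmetry of linear subspaces. The result does not depend on which base points $a_1,a_2$ are chosen. Changing $a_i$ within $L_i$ alters $w$ by an element of $W$, and this changes neither $\dist(w,W)$ nor $\mathrm{Proj}_{W^\perp}(w)$. That independence is what makes the formula usable in the later Gram-minor computation.
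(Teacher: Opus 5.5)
Your proposal is correct and follows essentially the same route as the paper: reduce the infimum over pairs of points to $\dist(a_2-a_1,U_1+U_2)$ using that $u_2-u_1$ sweeps out $U_1+U_2$, then invoke the orthogonal decomposition. You are also more explicit than the paper on two small points, namely the sign flip $\inf_{z\in W}\|w+z\|=\inf_{z\in W}\|w-z\|$ (which the paper passes over silently) and the Pythagorean argument for the projection formula (which the paper simply calls standard).
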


\begin{proof}
For $u_i\in U_i$,
\[
(a_2+u_2)-(a_1+u_1)
=
(a_2-a_1)+(u_2-u_1),
\]
and
\[
\{u_2-u_1|u_1\in U_1,u_2\in U_2\}=U_1+U_2.
\]
Therefore
\[
d(L_1,L_2)
=
\inf_{v\in U_1+U_2}\|(a_2-a_1)-v\|,
\]
which is the Euclidean distance from $a_2-a_1$ to $U_1+U_2$. The final equality is the standard orthogonal-projection formula.
\end{proof}

\begin{corollary}\label{cor:positive-rank}
If $d(L_1,L_2)>0$, then
\[
U_1+U_2\ne\R^n.
\]
In particular,
\[
\dim(U_1+U_2)\leqslant n-1.
\]
\end{corollary}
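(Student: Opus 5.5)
This follows directly from the projection formula, \cref{lem:projection}. Set $W:=U_1+U_2$ and $w:=a_2-a_1$. \cref{lem:projection} gives
\[
d(L_1,L_2)=\left\|\mathrm{Proj}_{W^\perp}(w)\right\|.
\]
I will prove the contrapositive: if $W=\R^n$, then the distance vanishes.

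Suppose $U_1+U_2=\R^n$. Then $W^\perp=\{0\}$, so the orthogonal projection onto $W^\perp$ is the zero map. Hence $\mathrm{Proj}_{W^\perp}(w)=0$, and therefore $d(L_1,L_2)=0$. The same conclusion can be read off directly from the infimum expression in the proof of \cref{lem:projection}. Since $w\in\R^n=U_1+U_2$, we can write $w=u_1'-u_2'$ with $u_i'\in U_i$. Taking $u_1=u_1'$ and $u_2=u_2'$, the points $a_1+u_1\in L_1$ and $a_2+u_2\in L_2$ coincide, so $L_1\cap L_2\neq\varnothing$.

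Therefore $d(L_1,L_2)>0$ forces $U_1+U_2\neq\R^n$. Since $U_1+U_2$ is a linear subspace of $\R^n$ and is proper, it has dimension at most $n-1$. This is the stated inequality.

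There is no genuine obstacle here. The only point needing care is the bookkeeping of signs in writing $w\in U_1+U_2$ as a difference $u_1-u_2$, and the fact that $U_1+U_2$ is closed under negation makes this immediate.
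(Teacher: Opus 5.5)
Your proof is correct and matches the paper's: both argue by contrapositive through \cref{lem:projection}, observing that $U_1+U_2=\R^n$ forces $(U_1+U_2)^\perp=\{0\}$ and hence $d(L_1,L_2)=0$. Your supplementary argument that the two planes actually intersect in this case is also valid, though not needed.
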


\begin{proof}
If $U_1+U_2=\R^n$, then $(U_1+U_2)^\perp=\{0\}$, and Lemma \ref{lem:projection} gives $d(L_1,L_2)=0$.
\end{proof}

The ordinary distance function on $\Graff(k,n)\times\Graff(k,n)$ need not be continuous. This is one reason why the proof below is arranged differently from a naive connectedness argument.

\begin{example}\label{ex:discontinuous}
In $\R^2$, let
\[
L=\{(x,0)|x\in\R\}
\]
and
\[
L_\theta
=(0,1)+\R(\cos\theta,\sin\theta).
\]
For every $\theta\notin\mathbb{Z}\pi$, the lines intersect, hence $d(L,L_\theta)=0$. However,
\[
L_0=\{(x,1)|x\in\R\},
\qquad
d(L,L_0)=1.
\]
Thus $d$ is not continuous on the affine Grassmannian product $\Graff(1,2)\times\Graff(1,2)$.
\end{example}

\subsection{Sums of squares of minors}

Let $C\in\Mat_{p,q}(\mathbb{R})$. For $j\geqslant0$, define
\[
\Delta_j(C)
:=
\sum_{\substack{I\subset\{1,\cdots,p\},\ |I|=j\\
J\subset\{1,\cdots,q\},\ |J|=j}}
\det(C_{I,J})^2.
\]
We use the conventions
\[
\Delta_0(C)=1
\]
and
\[
\Delta_j(C)=0
\qquad
\text{if }j>\min(p,q).
\]

\begin{lemma}\label{lem:rank-delta}
For every real matrix $C$ and every $j\geqslant1$,
\[
\Delta_j(C)>0
\quad\Longleftrightarrow\quad
\rank C\geqslant j.
\]
Consequently,
\[
\rank C=r
\quad\Longleftrightarrow\quad
\Delta_r(C)>0
\text{ and }
\Delta_{r+1}(C)=0.
\]
\end{lemma}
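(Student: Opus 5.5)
The lemma follows from the standard characterization of rank by minors, together with the fact that a sum of squares of real numbers is positive exactly when one of the terms is nonzero. First I will handle the conventions. If \(j>\min(p,q)\), then \(\Delta_j(C)=0\) by definition. Also \(\rank C\leqslant\min(p,q)<j\), so both sides of the equivalence are false and it holds trivially. From now on I will assume \(1\leqslant j\leqslant\min(p,q)\). In that range \(\Delta_j(C)\) is a finite, nonempty sum of the nonnegative real numbers \(\det(C_{I,J})^2\). So \(\Delta_j(C)>0\) holds if and only if some \(j\times j\) minor \(\det(C_{I,J})\) is nonzero. Realness of \(C\) is essential here, since over \(\mathbb{C}\) squares could cancel.

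Next I will invoke the classical fact that \(\rank C\geqslant j\) if and only if \(C\) has a nonvanishing \(j\times j\) minor. Both directions are short.
\begin{itemize}
\item If \(\det(C_{I,J})\neq0\), the \(j\) columns of \(C\) indexed by \(J\) are linearly independent, because their restriction to the rows \(I\) already is. Hence \(\rank C\geqslant j\).
\item Conversely, if \(\rank C\geqslant j\), choose \(j\) linearly independent columns. They form a \(p\times j\) matrix of rank \(j\), so it has \(j\) linearly independent rows. The resulting \(j\times j\) submatrix is invertible.
\end{itemize}
Combining this with the previous paragraph gives the first equivalence for every \(j\geqslant1\).

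For the consequence, suppose first that \(r\geqslant1\). Then \(\rank C=r\) is equivalent to the conjunction of \(\rank C\geqslant r\) and \(\rank C<r+1\). By the first part, this is the same as \(\Delta_r(C)>0\) and \(\Delta_{r+1}(C)=0\), using that \(\Delta_{r+1}(C)\geqslant0\) always. The case \(r=0\) needs separate care, because the first part was stated only for \(j\geqslant1\). Here \(\Delta_0(C)=1>0\) holds by convention, and \(\Delta_1(C)=\sum_{a,b}C_{ab}^2\) vanishes exactly when \(C=0\), that is, exactly when \(\rank C=0\).

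There is no real obstacle in this argument. The only points needing care are the boundary conventions (\(j=0\) and \(j>\min(p,q)\)) and the explicit use of real coefficients in the sum-of-squares step.
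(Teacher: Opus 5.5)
Your proof is correct and follows the paper's own route: a real sum of squares $\Delta_j(C)$ is positive exactly when some $j\times j$ minor is nonzero, and that is the classical minor criterion for $\rank C\geqslant j$. The boundary conventions you treat ($j>\min(p,q)$ and $r=0$) and your short proof of the minor criterion are details the paper leaves implicit.
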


\begin{proof}
The sum $\Delta_j(C)$ is a sum of squares. It is positive precisely when at least one $j\times j$ minor is nonzero, which is equivalent to $\rank C\geqslant j$.
\end{proof}

The next identity is the key algebraic input.

\begin{proposition}[Gram--minor distance identity]\label{prop:minor-distance}
Let $A\in\Mat_{n,m}(\R)$ have rank $r$, and let $w\in\R^n$. Then
\[\boxed{
\dist(w,\col A)^2
=
\frac{\Delta_{r+1}([A,w])}{\Delta_r(A)}.}
\]
Here $[A,w]$ denotes the matrix obtained by adjoining $w$ as a final column.
\end{proposition}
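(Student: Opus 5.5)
The plan is to recognize $\Delta_j(C)$ as a Gram determinant and then apply the classical "volume = base $\times$ height" identity. Throughout, $\Lambda^j$ carries the inner product induced from $\R^n$, so the standard basis wedges $e_I=e_{i_1}\wedge\cdots\wedge e_{i_j}$ are orthonormal.

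\textbf{Step 1 (multilinear reformulation).} By the Cauchy--Binet formula, $\Delta_j(C)=\sum_{|J|=j}\det\bigl((C^TC)_{J,J}\bigr)$. Equivalently,
\[
\Delta_j(C)=\sum_{|J|=j}\bigl\|c_{J_1}\wedge\cdots\wedge c_{J_j}\bigr\|^2,
\]
where the $c_\ell$ are the columns of $C$. Indeed, the coefficients of $c_{J_1}\wedge\cdots\wedge c_{J_j}$ in the orthonormal basis $\{e_I\}$ are exactly the minors $\det C_{I,J}$.

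\textbf{Step 2 (sort the $(r+1)$-subsets of columns of $[A,w]$).} Let $p=\mathrm{Proj}_{(\col A)^\perp}w$ and $w=q+p$ with $q\in\col A$. Any $r+1$ columns of $A$ alone are linearly dependent because $\rank A=r$, so their wedge vanishes. The only contributions to $\Delta_{r+1}([A,w])$ therefore come from subsets $J\cup\{w\}$ with $J$ an $r$-subset of columns of $A$. For such a subset, write $\omega_J=a_{J_1}\wedge\cdots\wedge a_{J_r}$. Since $q\in\col A$, the claim is that $\omega_J\wedge q=0$.
\begin{itemize}
\item If $\omega_J\neq0$, then the $a_{J_i}$ span $\col A$, so $q$ lies in their span and the wedge vanishes.
\item If $\omega_J=0$, the wedge vanishes trivially.
\end{itemize}
Hence $\omega_J\wedge w=\omega_J\wedge p$. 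Since $p$ is orthogonal to every $a_{J_i}$, the Gram determinant factorizes and gives
\[
\|\omega_J\wedge p\|^2=\|\omega_J\|^2\,\|p\|^2.
\]

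\textbf{Step 3 (sum and divide).} Summing over $J$ gives
\[
\Delta_{r+1}([A,w])=\|p\|^2\,\Delta_r(A).
\]
By Lemma~\ref{lem:rank-delta}, $\Delta_r(A)>0$, so we may divide. Since $\|p\|=\dist(w,\col A)$, as in Lemma~\ref{lem:projection}, the boxed formula follows. The degenerate case $r=0$ is covered by the convention $\Delta_0=1$: then $\Delta_1([0,w])=\|w\|^2$.

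The only step needing care is Step 2, namely the vanishing of $\omega_J\wedge q$ and the orthogonal factorization of the Gram determinant. For the factorization I would use the block form of the Gram matrix of $(a_{J_1},\dots,a_{J_r},p)$: its last row and column are zero except for the diagonal entry $\|p\|^2$.
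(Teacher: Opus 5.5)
Your proof is correct, and it takes a somewhat different route from the paper's. The paper first applies an orthogonal $Q$ carrying $\col A$ to $\R^r\times\{0\}$, so that the last $n-r$ rows of $A$ vanish. This step needs the orthogonal invariance of $\Delta_j$, which the paper justifies in passing by Cauchy--Binet or singular values. It then argues row by row: a nonzero $(r+1)\times(r+1)$ minor of $[A,w]$ must contain the column $w$, all of the first $r$ rows, and exactly one of the last $n-r$ rows, so it equals $\pm w_\ell\det(A_{[r],J})$ for some $\ell>r$.

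You never change coordinates. Reading $\Delta_j(C)$ via Cauchy--Binet as $\sum_{|J|=j}\|c_{J_1}\wedge\cdots\wedge c_{J_j}\|^2$ makes orthogonal invariance automatic. Your Step~1 is precisely the identity the paper only cites to justify its rotation. You then get the factor $\|p\|^2$ from three ingredients: the splitting $w=q+p$, the vanishing of $\omega_J\wedge q$, and the block-diagonal Gram determinant of $(a_{J_1},\dots,a_{J_r},p)$.

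Both arguments prove the same per-subset statement: for each $r$-subset $J$ of columns of $A$, its contribution to $\Delta_{r+1}([A,w])$ is $\|p\|^2$ times its contribution to $\Delta_r(A)$. The paper checks this by explicit determinant expansion in adapted coordinates, which is entirely elementary. Your version is coordinate-free and makes the ``volume $=$ base $\times$ height'' mechanism transparent.
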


\begin{proof}
Choose an orthogonal matrix $Q\in O(n)$ sending $\col A$ to
\[
\R^r\times\{0\}\subset\R^r\oplus\R^{n-r}.
\]
Both sides of the desired identity are invariant under left multiplication of $A$ and $w$ by $Q$. For the quantities $\Delta_j$, this follows either from Cauchy--Binet or from the invariance of the singular values under orthogonal transformations. Thus we may assume
\[
A=
\begin{pmatrix}
A_0\\
0
\end{pmatrix},
\qquad
A_0\in\Mat_{r,m}(\R),
\qquad
\rank A_0=r.
\]
Write
\[
w=
\begin{pmatrix}
w_\parallel\\
w_\perp
\end{pmatrix},
\qquad
w_\parallel\in\R^r,
\quad
w_\perp\in\R^{n-r}.
\]
Then
\[
\dist(w,\col A)^2=\|w_\perp\|^2.
\]

Consider an $(r+1)\times(r+1)$ minor of $[A,w]$. Since $A$ has rank $r$, every nonzero such minor must contain the last column $w$. Moreover, because the last $n-r$ rows of $A$ are zero, a nonzero minor must use exactly one row among those last $n-r$ rows and all of the first $r$ rows. Therefore, if the extra row corresponds to the coordinate $(w_\perp)_\ell$ and $J$ is a set of $r$ columns of $A$, the determinant is, up to sign,
\[
(w_\perp)_\ell\det((A_0)_{[r],J}).
\]
Squaring and summing gives
\[
\Delta_{r+1}([A,w])
=
\|w_\perp\|^2
\sum_{|J|=r}\det((A_0)_{[r],J})^2.
\]
Since only the first $r$ rows of $A$ can contribute to an $r\times r$ minor,
\[
\Delta_r(A)
=
\sum_{|J|=r}\det((A_0)_{[r],J})^2.
\]
Hence
\[
\Delta_{r+1}([A,w])
=
\Delta_r(A)\|w_\perp\|^2,
\]
which proves the claim.
\end{proof}

\section{Polynomialization on a fixed rank stratum}

Fix one standard affine Grassmann chart. Write
\[
x=(X,b),
\qquad
y=(Y,c),
\]
with
\[
X,Y\in\Mat_{n-k,k}(\R),
\qquad
b,c\in\R^{n-k}.
\]
Define
\[
A(x,y)
:=
[B_X,B_Y]
=
\begin{pmatrix}
I_k&I_k\\
X&Y
\end{pmatrix}
\in\Mat_{n,2k}(\R),
\]
and
\[
w(x,y)
:=
\binom{0}{c-b}\in\R^n.
\]
Then
\[
\col A(x,y)=U_X+U_Y
\]
and, by Lemma \ref{lem:projection},
\[
d(L_x,L_y)
=
\dist(w(x,y),\col A(x,y)).
\]

Let
\[
r_*:=\min(2k,n-1).
\]
For pairwise positive-distance affine $k$-planes, Corollary \ref{cor:positive-rank} shows that the possible values of $\dim(U_X+U_Y)$ belong to
\[
\{k,k+1,\cdots,r_*\}.
\]

Fix an integer
\[
k<r\leqslant r_*.
\]
Define the polynomials
\[
F_r(x,y)
:=
\Delta_{r+1}([A(x,y),w(x,y)])
-
\Delta_r(A(x,y)),
\]
\[
R_r(x,y)
:=
\Delta_{r+1}(A(x,y)),
\]
and
\[
H_r(x,y)
:=
F_r(x,y)^2+R_r(x,y)^2.
\]

\begin{proposition}\label{prop:H-properties}
The polynomial $H_r$ has the following properties.
\begin{enumerate}[label=\textnormal{(\roman*)}]
\item If
\[
\rank A(x,y)=r
\quad\text{and}\quad
d(L_x,L_y)=1,
\]
then
\[
H_r(x,y)=0.
\]

\item If
\[
H_r(x,y)=0
\quad\text{and}\quad
\Delta_r(A(x,y))>0,
\]
then
\[
\rank A(x,y)=r
\quad\text{and}\quad
d(L_x,L_y)=1.
\]

\item For every $x$,
\[
H_r(x,x)=0.
\]

\item The total degree satisfies
\[
\deg H_r\leqslant 4(r+1).
\]
\end{enumerate}
\end{proposition}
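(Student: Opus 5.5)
The plan is to verify the four properties directly from Lemma~\ref{lem:rank-delta} and Proposition~\ref{prop:minor-distance}, writing $A=A(x,y)$ and $w=w(x,y)$. Since $H_r=F_r^2+R_r^2$ is a sum of two real squares, $H_r(x,y)=0$ holds if and only if $F_r(x,y)=0$ and $R_r(x,y)=0$. Parts (i)--(iii) therefore reduce to checking the two equations $F_r=0$ and $R_r=0$.

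For (i), assume $\rank A=r$. Lemma~\ref{lem:rank-delta} gives $\Delta_{r+1}(A)=0$, so $R_r=0$. Proposition~\ref{prop:minor-distance} together with Lemma~\ref{lem:projection} gives $\Delta_{r+1}([A,w])=\Delta_r(A)\,d(L_x,L_y)^2=\Delta_r(A)$, so $F_r=0$. For (ii), assume $H_r=0$ and $\Delta_r(A)>0$. Then $R_r=0$, so $\Delta_{r+1}(A)=0$, and Lemma~\ref{lem:rank-delta} gives $\rank A=r$. Proposition~\ref{prop:minor-distance} then applies and yields $d(L_x,L_y)^2=\Delta_{r+1}([A,w])/\Delta_r(A)=1$, using $F_r=0$. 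Since distances are nonnegative, $d(L_x,L_y)=1$.

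For (iii), take $y=x$. Then $w=0$, and $A=[B_X,B_X]$ has rank $k$ because $B_X$ contains $I_k$. Since $r>k$, we have $r+1>k$ and also $r>k$, so Lemma~\ref{lem:rank-delta} gives $\Delta_{r+1}(A)=0$ and $\Delta_r(A)=0$. Hence $R_r=0$. Every $(r+1)$-minor of $[A,0]$ either contains the zero column or is an $(r+1)$-minor of $A$, so $\Delta_{r+1}([A,w])=0$ and therefore $F_r=0$. This step uses the strict inequality $r>k$ essentially. At $r=k$, the term $\Delta_k(A)$ would be positive on the diagonal, and $F_r$ would not vanish there.

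For (iv), the entries of $A$ and $w$ are polynomials of degree at most $1$ in $(x,y)$: they are constants, entries of $X$ or $Y$, or differences $c_i-b_i$. A $j\times j$ minor therefore has degree at most $j$, and its square has degree at most $2j$. So $\Delta_{r+1}([A,w])$ and $R_r=\Delta_{r+1}(A)$ have degree at most $2(r+1)$, while $\Delta_r(A)$ has degree at most $2r$. Consequently $\deg F_r\leqslant 2(r+1)$ and $\deg R_r\leqslant 2(r+1)$, which gives $\deg H_r\leqslant 4(r+1)$.

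None of these steps presents a real obstacle. The only points needing care are using $r>k$ in (iii) and applying Proposition~\ref{prop:minor-distance} only once the rank is known to equal exactly $r$.
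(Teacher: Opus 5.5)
Your proof is correct and follows the paper's argument essentially step for step. You reduce $H_r=0$ to $F_r=R_r=0$, use Lemma~\ref{lem:rank-delta} and Proposition~\ref{prop:minor-distance} for (i)--(ii), use the rank-$k$ diagonal with $w=0$ and $r>k$ for (iii), and count degrees from the affine-linear entries for (iv); your extra justifications (why $\Delta_{r+1}([A,0])=0$, and $d\geqslant 0$ turning $d^2=1$ into $d=1$) only make explicit what the paper leaves implicit.
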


\begin{proof}
Suppose first that $\rank A(x,y)=r$. By Proposition \ref{prop:minor-distance},
\[
d(L_x,L_y)^2
=
\frac{\Delta_{r+1}([A(x,y),w(x,y)])}{\Delta_r(A(x,y))}.
\]
Since $\rank A(x,y)=r$, we have
\[
\Delta_r(A(x,y))>0
\quad\text{and}\quad
\Delta_{r+1}(A(x,y))=0.
\]
If $d(L_x,L_y)=1$, then
\[
\Delta_{r+1}([A,w])=\Delta_r(A),
\]
so $F_r=R_r=0$, proving (i).

Now assume $H_r(x,y)=0$ and $\Delta_r(A(x,y))>0$. Since $H_r$ is a sum of two squares,
\[
F_r(x,y)=0,
\qquad
R_r(x,y)=0.
\]
The second equality gives
\[
\Delta_{r+1}(A(x,y))=0,
\]
while the assumed positivity of $\Delta_r$ gives
\[
\rank A(x,y)\geqslant r.
\]
By Lemma \ref{lem:rank-delta},
\[
\rank A(x,y)=r.
\]
The equality $F_r=0$, together with Proposition \ref{prop:minor-distance}, now yields
\[
d(L_x,L_y)^2=1.
\]
This proves (ii).

For (iii), note that
\[
A(x,x)=[B_X,B_X]
\]
has rank $k<r$, while
\[
w(x,x)=0.
\]
Therefore
\[
\Delta_r(A(x,x))=0,
\quad
\Delta_{r+1}(A(x,x))=0,
\quad
\Delta_{r+1}([A(x,x),w(x,x)])=0,
\]
and hence $H_r(x,x)=0$.

Finally, every entry of $A(x,y)$ and $w(x,y)$ is affine linear in the chart variables. A $j\times j$ minor therefore has degree at most $j$, and its square has degree at most $2j$. Thus
\[
\deg F_r\leqslant 2(r+1),
\qquad
\deg R_r\leqslant 2(r+1),
\]
and hence
\[
\deg H_r\leqslant 4(r+1).
\]
\end{proof}

\begin{remark}
The point of $H_r$ is that it simultaneously remembers the unit-distance equation and the upper-rank condition
\[
\rank A(x,y)\leqslant r.
\]
The additional hypothesis $\Delta_r(A(x,y))>0$ then recovers the exact rank $r$. No choice of a nonzero Gram minor is required.
\end{remark}

\section{Restricted Zariski closure}

We use the symmetric restricted Zariski closure introduced by Solymosi and Zahl \cite{SZ}.

Let
\[
H:\R^M\times\R^M\to\R
\]
be a real polynomial. For $a,b\in\R^M$, define the slices
\[
H_a(y):=H(a,y),
\qquad
H^b(x):=H(x,b).
\]

\begin{definition}[Restricted Zariski closure]\label{def:rzc}
For $X\subset\R^M$, define
\[
\overline X^{\,H}
:=
\bigcap_{\substack{a\in\R^M\\X\subset Z(H_a)}}Z(H_a)
\cap
\bigcap_{\substack{b\in\R^M\\X\subset Z(H^b)}}Z(H^b).
\]
If one of the indexing families is empty, the corresponding intersection is interpreted as $\R^M$.
\end{definition}

\begin{lemma}[Propagation lemma]\label{lem:propagation}
Let $X\subset\R^M$ be nonempty and suppose
\[
H(x,y)=0
\qquad
(x,y\in X).
\]
Then
\[
H(x,y)=0
\qquad
(x,y\in\overline X^{\,H}).
\]
\end{lemma}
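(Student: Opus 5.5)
The plan is to propagate the vanishing of $H$ one variable at a time. Each step uses only the defining intersections in Definition \ref{def:rzc}. I write $\overline X:=\overline X^{\,H}$ and note that $X\subset\overline X$, since every $Z(H_a)$ and $Z(H^b)$ in the intersections contains $X$ by construction.

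\textbf{Step 1: the first variable ranges over $\overline X$, the second over $X$.} Fix $b\in X$. By hypothesis $H(x,b)=0$ for all $x\in X$, that is, $X\subset Z(H^b)$. So $Z(H^b)$ is one of the sets in the second intersection defining $\overline X$, and therefore $\overline X\subset Z(H^b)$. Hence $H(x,b)=0$ for all $x\in\overline X$ and $b\in X$.

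\textbf{Step 2: both variables range over $\overline X$.} Now fix $a\in\overline X$. Step 1 gives $H(a,y)=0$ for every $y\in X$, i.e. $X\subset Z(H_a)$. Then $Z(H_a)$ is one of the sets in the first intersection, so $\overline X\subset Z(H_a)$. This means $H(a,y)=0$ for all $y\in\overline X$. Since $a\in\overline X$ was arbitrary, $H$ vanishes on $\overline X\times\overline X$, which is the claim.

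The argument does not need $H$ to be symmetric. This matters for $H_r$, which is not obviously symmetric in $(x,y)$ since $w$ changes sign under the swap, even though $\Delta$ is invariant. The two-sided definition is exactly what lets us use slices in the second variable in Step 1 and slices in the first variable in Step 2. Nonemptiness of $X$ is not really used, beyond making the families meaningful.

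The only point needing care is the correct order of quantifiers: fix a point of $X$ first, then a point of $\overline X$. The steps must use one-sided information, so that Step 2 applies to slices $H_a$ with $a\in\overline X$ rather than only $a\in X$. I do not expect any genuine obstacle.
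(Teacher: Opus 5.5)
Your two-step argument is correct. Using the slices $H^b$ with $b\in X$ first gives vanishing on $\overline X^{\,H}\times X$, and then using the slices $H_a$ with $a\in\overline X^{\,H}$ upgrades this to $\overline X^{\,H}\times\overline X^{\,H}$; this is the standard proof, and the paper gives none of its own, simply citing Solymosi--Zahl, Lemma 3.1. One aside is inaccurate but harmless: $H_r$ is in fact symmetric, because swapping $x$ and $y$ only permutes the columns of $A$ and negates $w$, and each $\Delta_j$ is invariant under both operations, since the squared minors do not change.
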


\begin{proof}
See \cite[Lemma 3.1]{SZ}.
\end{proof}

For our purposes, we also need a uniform bound on the algebraic complexity of $\overline X^{\,H}$. This is elementary because all defining slices lie in a finite-dimensional polynomial space.

\begin{lemma}[Finite defining family]\label{lem:finite-defining}
Suppose $H$ has degree at most $D$ in each block of variables. Let
\[
S(M,D):=\binom{M+D}{D}.
\]
Then, for every $X\subset\R^M$, the restricted closure $\overline X^{\,H}$ can be defined by at most $S(M,D)$ polynomial equations, each of degree at most $D$.
\end{lemma}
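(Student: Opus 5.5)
The plan is linear algebra in the finite-dimensional space of polynomials in one block of variables. Write $H(x,y)=\sum_{|\alpha|\leqslant D} h_\alpha(x)\,y^\alpha$, where $\alpha$ ranges over multi-indices in $M$ variables of total degree at most $D$. The number of such monomials is exactly $S(M,D)=\binom{M+D}{D}$. For each $a\in\R^M$, the slice $H_a(y)=\sum_\alpha h_\alpha(a)y^\alpha$ lies in the real vector space $P_{M,D}$ of polynomials in $y$ of degree at most $D$, which has dimension $S(M,D)$. Similarly every slice $H^b$ lies in the space $P_{M,D}$ of polynomials in $x$ of degree at most $D$.

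The key step is the observation that the conditions $X\subset Z(H_a)$ and $X\subset Z(H^b)$ cut out \emph{linear} subspaces of $P_{M,D}$. Let
\[
W_1:=\Span\{H_a \mid a\in\R^M,\ X\subset Z(H_a)\}\subset P_{M,D},
\]
and define $W_2$ in the same way using the slices $H^b$. Every element of $W_1$ vanishes on $X$, because vanishing on $X$ is preserved under linear combinations. Moreover, the common zero set of a spanning family equals the common zero set of the whole span. So if $g_1,\dots,g_s$ is a basis of $W_1$ chosen among the slices $H_a$ themselves, with $s\leqslant S(M,D)$, then
\[
\bigcap_{X\subset Z(H_a)}Z(H_a)=Z(g_1,\dots,g_s).
\]
Extracting a basis from a spanning set is always possible in finite dimensions. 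The same argument applied to $W_2$ gives generators $g'_1,\dots,g'_t$ with $t\leqslant S(M,D)$. If an indexing family is empty, the corresponding intersection is $\R^M$, which is consistent with the convention in Definition~\ref{def:rzc}.

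The only obstacle is the count: this naive approach gives $s+t\leqslant 2S(M,D)$ equations, not $S(M,D)$. I would resolve this by replacing the two families with their sums of squares. Since the zero set of $\sum_i g_i^2$ equals $Z(g_1,\dots,g_s)$ over $\R$, this yields a single equation, but its degree is $2D$ rather than $D$, which violates the stated degree bound. Instead I would use the following bound. Both $Z(g_1,\dots,g_s)$ and $Z(g'_1,\dots,g'_t)$ are defined by elements of the same space $P_{M,D}$, so the union of the two families spans a subspace $W_1+W_2\subset P_{M,D}$ (identifying the $x$- and $y$-variables). The variety $\overline X^{\,H}$ is exactly the common zero set of $W_1+W_2$, since it is the intersection of the zero sets of $W_1$ and of $W_2$. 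Choosing a basis of $W_1+W_2$ drawn from the $g_i$ and $g'_j$ gives at most $\dim P_{M,D}=S(M,D)$ polynomials, each of degree at most $D$. Their common zero set is $\overline X^{\,H}$, which proves the lemma. The only point to check with care is that "degree at most $D$ in each block" is interpreted as total degree in that block, so that every slice really lies in $P_{M,D}$. By Proposition~\ref{prop:H-properties}(iv) this holds for $H_r$ with $D=4(r+1)$.
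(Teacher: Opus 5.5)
Your argument is correct and matches the paper's proof. The paper takes the span $W_X\subset\mathcal P_{M,D}$ of the combined family of all admissible slices $H_a$ and $H^b$ at once and extracts a basis of at most $S(M,D)$ polynomials, which is exactly your final step with $W_1+W_2$; the intermediate detour through separate bases of $W_1$ and $W_2$, and the sum-of-squares idea, is unnecessary but harmless.
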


\begin{proof}
Let $\mathcal P_{M,D}$ denote the vector space of real polynomials in $M$ variables of degree at most $D$. Its dimension is
\[
\dim\mathcal P_{M,D}=S(M,D).
\]
Consider the family $\mathcal F_X\subset\mathcal P_{M,D}$ consisting of all slices $H_a$ and $H^b$ whose zero sets contain $X$. By definition,
\[
\overline X^{\,H}
=
\bigcap_{P\in\mathcal F_X}Z(P).
\]
Let
\[
W_X:=\Span_{\R}(\mathcal F_X)\subset\mathcal P_{M,D}.
\]
Choose a basis
\[
P_1,\cdots,P_s
\]
of $W_X$. Then
\[
s\leqslant S(M,D).
\]
A point $z$ lies in the zero set of every polynomial in $\mathcal F_X$ if and only if it lies in the zero set of every polynomial in $W_X$, which is equivalent to
\[
P_1(z)=\cdots=P_s(z)=0.
\]
Hence
\[
\overline X^{\,H}=Z(P_1,\cdots,P_s).
\]
\end{proof}

\section{Uniform Nash decomposition}

We use a standard consequence of effective semialgebraic geometry. We state it in the form needed here.

\begin{definition}
A subset \(S\subset\R^M\) is called a \emph{Nash submanifold}
of \(\R^M\) if \(S\) is both a semialgebraic subset of \(\R^M\)
and a real-analytic embedded submanifold of \(\R^M\).
It is called \emph{connected} if it is connected in the Euclidean
topology.
\end{definition}

\begin{theorem}[Uniform Nash decomposition]
\label{thm:nash-stratification}
For every triple of positive integers \(M,s,D\), there exists a positive integer
\[
\Sigma(M,s,D)
\]
with the following property. If
\[
Z\subset\R^M
\]
is a real algebraic set defined by at most \(s\) polynomial equations
of degree at most \(D\), then \(Z\) admits a partition
\[
Z=S_1\sqcup\cdots\sqcup S_N
\]
with
\[
N\leqslant\Sigma(M,s,D),
\]
where every \(S_j\) is a connected Nash submanifold of \(\R^M\).
\end{theorem}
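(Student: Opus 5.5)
The plan is to obtain the partition by combining a cylindrical algebraic decomposition (CAD), or more economically a Whitney/Nash stratification with effective complexity bounds, with the uniform bounds on the number and degrees of its defining polynomials. Let \(P_1,\dots,P_s\) be the defining equations of \(Z\), each of degree at most \(D\). Form the family \(\mathcal P=\{P_1,\dots,P_s\}\). I will run the standard CAD algorithm: at each projection step, replace the current family by the family of its discriminants, leading coefficients, subresultant coefficients and pairwise resultants. The number of polynomials and their degrees after each of the \(M-1\) projection steps are bounded by explicit functions of \((M,s,D)\) alone. For instance, a crude doubly exponential bound \((sD)^{2^{O(M)}}\) suffices, as in Basu--Pollack--Roy.

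The resulting CAD of \(\R^M\) is adapted to \(\mathcal P\). Its cells are therefore sign-invariant for every \(P_i\), and \(Z\) is a union of cells. The number of cells is bounded by a function of \((M,s,D)\) only, and this function is \(\Sigma(M,s,D)\).

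It then remains to check that each cell is a connected Nash submanifold. I will argue by induction on the coordinate. Over a connected Nash cell \(C\subset\R^{j}\), the roots of the projected polynomials are delineable. This means they are given by continuous semialgebraic functions \(\xi_1<\dots<\xi_\ell\) on \(C\). Each \(\xi_i\) is a simple root of a suitable square-free polynomial, or a root of constant multiplicity of one of the polynomials. The implicit function theorem, applied to the appropriate derivative of that polynomial, then shows that \(\xi_i\) is Nash. Consequently the graphs of the \(\xi_i\) and the bands between consecutive roots are Nash submanifolds of \(\R^{j+1}\). Each is homeomorphic to \(C\) or to \(C\times(0,1)\), hence connected.

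The cells partition \(\R^M\), and \(Z\) is a union of cells. Restricting to the cells contained in \(Z\) therefore gives the desired partition \(Z=S_1\sqcup\dots\sqcup S_N\) with \(N\le\Sigma(M,s,D)\).

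I expect the main obstacle to be the Nash (real-analytic) regularity of the section functions in the delineability step. The constant-multiplicity argument requires the projection operator to include enough derivatives or subresultants. The clean route is to invoke the version of CAD in Bochnak--Coste--Roy, \emph{Real Algebraic Geometry}, Thm.~9.1.8 together with Prop.~9.1.9. There the cells of a CAD adapted to \(\mathcal P\) are Nash cells, that is, Nash diffeomorphic to open hypercubes. I will then only need to track the complexity bound through the projection steps, which is routine.

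If an explicit \(\Sigma\) matching the main theorem is wanted, a more economical alternative is to use the Gabrielov--Vorobjov bound on stratifications of semialgebraic sets. Its output can be refined into connected components, and the number of those is bounded by the Basu--Pollack--Roy bound \(O(sD)^{M}\) on Betti numbers applied stratum-wise.
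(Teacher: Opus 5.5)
Your proof is correct, but it takes a different route from the paper. You use a cylindrical algebraic decomposition adapted to the defining polynomials:

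\begin{itemize}
\item sign-invariance makes \(Z\) a union of cells;
\item connectedness is automatic, since every cell is homeomorphic to an open cube;
\item the Nash property comes from the implicit function theorem applied to \(\partial^{e-1}f/\partial t^{e-1}\) at a root of constant multiplicity \(e\), which is exactly why you need the full Collins/BCR projection, as you note;
\item the number of pieces is bounded by the standard \((sD)^{2^{O(M)}}\) cell count.
\end{itemize}

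The paper instead applies the Gabrielov--Vorobjov dense stratification. This splits \(Z\) into at most \(L_0(M,s,D)\) smooth strata, each described by a bounded number of equations and strict inequalities of bounded degree. The Nash property comes from the maximal-rank Jacobians in that construction. The strata need not be connected, so the paper needs a second ingredient: the Basu--Pollack--Roy bound on \(b_0\) of sign-condition realizations bounds the number of components of each stratum before splitting. That bound uses the strata's own parameters \(s_0,D_0\), not the original \(s,D\), a point your closing aside glosses over.

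What each route buys:
\begin{itemize}
\item Yours is more elementary and textbook-based. It needs no separate component count, and it gives a stronger conclusion: each piece is Nash diffeomorphic to an open cube.
\item The paper chose Gabrielov--Vorobjov because it comes with explicit degree and count estimates, which feed directly into the constant \(\Sigma_\rho\) of Lemma~\ref{lem:recursion}.
\end{itemize}

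A CAD count could serve the explicit theorem as well. It would give a double-exponential bound, provided you first replace the \(\binom{M+D}{D}\) equations by a single sum of squares as the paper does. The constants in \(2^{2^{10M}}\) would then have to be recomputed. Your final \emph{more economical alternative} is essentially the paper's own proof.
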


\begin{proof}
Write
\[
Z=V(f_1,\cdots,f_t),
\qquad
t\leqslant s,
\qquad
\deg f_i\leqslant D.
\]
Regarded as a semialgebraic set, \(Z\) is defined by a formula of
format depending only on \(M,s,D\).

By the dense semialgebraic stratification theorem of Gabrielov and
Vorobjov
\cite[Section~6, Corollary~2]{GabrielovVorobjov},
there exist constants
\[
L_0=L_0(M,s,D),\qquad
s_0=s_0(M,s,D),\qquad
D_0=D_0(M,s,D)
\]
and a finite disjoint decomposition
\[
Z=T_1\sqcup\cdots\sqcup T_L,
\qquad
L\leqslant L_0,
\]
such that each \(T_\alpha\) is a smooth semialgebraic submanifold
and admits a description by at most \(s_0\) polynomial equations
and strict inequalities, all of degree at most \(D_0\).

More precisely, after relabeling the defining polynomials, each
\(T_\alpha\) has the form
\[
T_\alpha
=
\left\{
x\in\R^M\big|
q_{\alpha,1}(x)=\cdots=q_{\alpha,a_\alpha}(x)=0,\;
p_{\alpha,1}(x)>0,\cdots,p_{\alpha,b_\alpha}(x)>0
\right\},
\]
where
\[
a_\alpha+b_\alpha\leqslant s_0
\]
and all the polynomials involved have degree at most \(D_0\). By the definition of the elementary strata in Gabrielov-Vorobjov \cite{GabrielovVorobjov}, each \(T_\alpha\) is locally defined by polynomial equations whose Jacobian has maximal rank. Hence the real analytic implicit function theorem shows that \(T_\alpha\) is a real analytic submanifold. Since it is also semialgebraic, it is a Nash submanifold.

It remains to control the number of connected components of the
\(T_\alpha\).  Put
\[
Q_\alpha
=
\{q_{\alpha,1},\cdots,q_{\alpha,a_\alpha}\},
\qquad
P_\alpha
=
\{p_{\alpha,1},\cdots,p_{\alpha,b_\alpha}\}.
\]
Then \(T_\alpha\) is one realization of a sign condition of
\(P_\alpha\) on the algebraic set
\[
V(Q_\alpha).
\]
By
\cite[Theorem~1.1]{BPRsign}, the sum of the zeroth Betti numbers
of all realizable sign conditions of \(P_\alpha\) on
\(V(Q_\alpha)\) is bounded by a constant depending only on
\(M,s_0,D_0\).  Consequently, there exists
\[
C_0=C_0(M,s_0,D_0)
\]
such that
\[
b_0(T_\alpha)\leqslant C_0
\]
for every \(\alpha\).

The connected components of a semialgebraic set are semialgebraic.
Since each connected component of \(T_\alpha\) is an open and closed
submanifold of \(T_\alpha\), it is again a Nash submanifold.
Splitting every \(T_\alpha\) into its connected components therefore
gives a partition
\[
Z=S_1\sqcup\cdots\sqcup S_N
\]
into connected Nash submanifolds, with
\[
N
\leqslant
L_0(M,s,D)\,
C_0\bigl(M,s_0(M,s,D),D_0(M,s,D)\bigr).
\]
Thus the conclusion follows by defining the right-hand side to be
\(\Sigma(M,s,D)\).
\end{proof}

\begin{remark}
Only the existence of a uniform bound depending on \(M,s,D\) is
needed in the qualitative argument. No explicit estimate for
\(\Sigma(M,s,D)\) is required there.
\end{remark}

We shall also use the elementary identity principle for real-analytic functions.

\begin{lemma}[Analytic zero sets] \label{lem:analytic-zero} 
Let \(S\) be a connected real-analytic manifold, and let \[ f:S\longrightarrow\R \] be real analytic. If \(f\not\equiv0\), then its zero set \[ Z(f):=\{x\in S|f(x)=0\} \] has empty interior in \(S\). Equivalently, the nonzero locus \[ \{x\in S|f(x)\ne0\} \] is dense in \(S\). 
\end{lemma}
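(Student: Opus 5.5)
The plan is to prove the contrapositive: if the zero set of \(f\) has nonempty interior in \(S\), then \(f\equiv0\) on all of \(S\). I would argue by connectedness, using the identity theorem for power series in local analytic charts.

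First, I define the set
\[
\Omega:=\{x\in S \mid f \text{ vanishes identically on some open neighbourhood of } x\}.
\]
By construction \(\Omega\) is open. If \(Z(f)\) has nonempty interior, then that interior lies in \(\Omega\), so \(\Omega\neq\emptyset\).

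The main step is to show that \(\Omega\) is closed. Take \(p\in\overline{\Omega}\) and a real-analytic chart \(\varphi:W\to\R^d\) around \(p\) whose image is a ball \(B\) centred at \(\varphi(p)\). On a small ball around \(\varphi(p)\), the function \(g=f\circ\varphi^{-1}\) is given by a convergent power series. I would avoid a single expansion at \(p\) and instead work on the connected set \(B\) directly. Let \(E\subset B\) be the set of points where all partial derivatives of \(g\) of all orders vanish. This set is closed in \(B\) by continuity of the derivatives. It is also open in \(B\), because near any point of \(E\) the Taylor series of \(g\) converges to \(g\) and has all coefficients zero. The set \(E\) is nonempty: since \(p\in\overline\Omega\), there is a point \(q\in\Omega\cap W\), and \(g\) vanishes on a neighbourhood of \(\varphi(q)\), so all its derivatives vanish there. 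Because \(B\) is connected, \(E=B\), hence \(g\equiv0\) on \(B\) and \(p\in\Omega\). The only care needed in this step is to make the identity theorem local to a connected chart domain rather than to one power series expansion.

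Since \(S\) is connected and \(\Omega\) is nonempty, open and closed, we get \(\Omega=S\), so \(f\equiv0\). This proves the contrapositive. The equivalent formulation follows immediately: the nonvanishing locus is the complement of \(Z(f)\), and the complement of a set with empty interior is dense.
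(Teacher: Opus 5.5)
Your argument is correct and takes essentially the paper's approach: it is the standard open--closed connectedness proof of the identity theorem for real-analytic functions, which the paper simply invokes in one line without deriving it. Defining \(\Omega\) as the set of points near which \(f\) vanishes identically, and running the derivative-vanishing argument on a connected ball chart, correctly avoids chart-dependence and radius-of-convergence issues.
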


\begin{proof}
This is the usual identity theorem for real-analytic functions on connected real-analytic manifolds.
\end{proof}

\section{A uniform bound on fixed-rank configurations}

We now prove the main algebraic-geometric step.

\begin{theorem}[Fixed-rank bound]\label{thm:fixed-rank}
Fix integers $0<k<n$ and
\[
k<r\leqslant\min(2k,n-1).
\]
There exists a finite constant
\[
B(k,n,r)
\]
with the following property. Let $X$ be a finite set of points in one standard affine Grassmann chart such that
\[
d(L_x,L_y)=1
\]
and
\[
\dim(U_x+U_y)=r
\]
for every two distinct points $x,y\in X$. Then
\[
|X|\leqslant B(k,n,r).
\]
\end{theorem}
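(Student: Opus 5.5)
Fix the chart and the rank $r$, and let $H=H_r$ be the polynomial of Proposition~\ref{prop:H-properties}. By (i) of that proposition, $H(x,y)=0$ for distinct $x,y\in X$. By (iii), $H(x,x)=0$. So $H$ vanishes on all of $X\times X$, and the Propagation Lemma~\ref{lem:propagation} gives $H\equiv0$ on $Z\times Z$, where $Z:=\overline X^{\,H}$.

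Next I bound the complexity of $Z$. Each block of variables of $H$ has degree at most $D:=4(r+1)$. By Lemma~\ref{lem:finite-defining}, $Z$ is cut out by at most $S(M,D)$ equations of degree at most $D$, with $M=(k+1)(n-k)$. Theorem~\ref{thm:nash-stratification} then partitions $Z=S_1\sqcup\cdots\sqcup S_N$ into connected Nash submanifolds, with
\[
N\leqslant \Sigma\bigl(M,S(M,D),D\bigr).
\]
Since $X\subset Z$, it suffices to show that each $S_j$ contains at most one point of $X$. Then $B(k,n,r):=\Sigma(M,S(M,D),D)$ works.

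Suppose instead that $x_0\neq y_0$ are points of $X$ lying in one piece $S:=S_j$. Then $S\times S$ is a connected real-analytic manifold. The function $f(x,y):=\Delta_r(A(x,y))$ is polynomial, hence analytic on $S\times S$, and $f(x_0,y_0)>0$ because $\dim(U_{x_0}+U_{y_0})=r$. So $f\not\equiv0$, and Lemma~\ref{lem:analytic-zero} makes $\{f\neq0\}$ dense in $S\times S$.

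Take any $p\in S$. Density gives pairs $(x_m,y_m)\in S\times S$ with $(x_m,y_m)\to(p,p)$ and $f(x_m,y_m)>0$. Since $H\equiv0$ on $S\times S$, Proposition~\ref{prop:H-properties}(ii) yields $d(L_{x_m},L_{y_m})=1$. On the other hand, the distinguished points $p_{x_m}=\binom{0}{b_m}\in L_{x_m}$ and $p_{y_m}=\binom{0}{c_m}\in L_{y_m}$ satisfy $\|p_{x_m}-p_{y_m}\|=\|b_m-c_m\|\to0$. Therefore
\[
d(L_{x_m},L_{y_m})\leqslant\|b_m-c_m\|\to0,
\]
which contradicts the value $1$. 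Hence each $S_j$ meets $X$ in at most one point.

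The step I expect to need the most care is this diagonal approximation. Lemma~\ref{lem:analytic-zero} requires connectedness of $S\times S$, which holds because $S$ is connected. It also requires that the approximating pairs stay in the set where $f>0$, so that (ii) applies. This is exactly why the Nash pieces must be connected, and why the density statement is applied on the product $S\times S$ rather than on $Z\times Z$. I would also check that $p=x_0$ is a legitimate choice of limit point: it is, since $(x_0,x_0)\in S\times S$ lies in the closure of $\{f\neq0\}$.
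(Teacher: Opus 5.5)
Your proposal is correct and follows the paper's proof essentially step for step. Both arguments use the same chain: vanishing of $H_r$ on $X\times X$ via (i) and (iii), propagation to $\overline X^{\,H_r}$, the bound $\Sigma(M,S(M,4(r+1)),4(r+1))$ on the number of connected Nash pieces, and the diagonal-density argument for $\Delta_r(A)$ on $S\times S$ combined with $d(L_u,L_v)\leqslant\|b-c\|$.
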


\begin{proof}
Let
\[
M=(k+1)(n-k),
\qquad
D_r:=4(r+1),
\]
and let $H_r$ be the polynomial from Proposition \ref{prop:H-properties}. By hypothesis and part (i) of that proposition,
\[
H_r(x,y)=0
\qquad
(x\ne y,\ x,y\in X).
\]
Part (iii) gives
\[
H_r(x,x)=0
\qquad
(x\in X).
\]
Therefore
\[
H_r(x,y)=0
\qquad
(x,y\in X).
\]
By Lemma \ref{lem:propagation},
\[
H_r(u,v)=0
\qquad
(u,v\in \overline{X}^{\,H_r}).
\]
By Lemma \ref{lem:finite-defining}, the set $\overline{X}^{\,H_r}$ can be defined by at most
\[
s_r:=S(M,D_r)=\binom{M+D_r}{D_r}
\]
polynomials of degree at most $D_r$. Apply Theorem \ref{thm:nash-stratification}. We obtain a partition
\[
\overline{X}^{\,H_r}=S_1\sqcup\cdots\sqcup S_N,
\qquad
N\leqslant\Sigma(M,s_r,D_r),
\]
where each $S_j$ is a connected Nash submanifold.

We claim that
\[
|X\cap S_j|\leqslant1
\]
for every $j$. Suppose, to the contrary, that one piece $S:=S_j$ contains two distinct points
\[
x,y\in X.
\]
Consider the polynomial function
\[
G(u,v)
:=
\Delta_r(A(u,v))
\]
restricted to the connected Nash manifold
\[
S\times S.
\]
Since the pair $(x,y)$ has rank $r$,
\[
G(x,y)>0.
\]
Thus $G$ is not identically zero on $S\times S$.

Fix one point $z\in S$; for instance, take $z=x$. The product $S\times S$ is connected, and $G$ is real analytic on it. By Lemma \ref{lem:analytic-zero}, every neighborhood of $(z,z)$ in $S\times S$ contains a point $(u,v)$ with
\[
G(u,v)\ne0.
\]
Since $G$ is a sum of squares, this means
\[
G(u,v)>0.
\]
Hence we may choose a sequence
\[
(u_m,v_m)\in S\times S
\]
such that
\[
(u_m,v_m)\to(z,z)
\]
and
\[
\Delta_r(A(u_m,v_m))>0
\]
for every $m$.

Because $u_m,v_m\in \overline{X}^{\,H_r}$, we have
\[
H_r(u_m,v_m)=0.
\]
By part (ii) of Proposition \ref{prop:H-properties}, it follows that
\[
\rank A(u_m,v_m)=r
\]
and
\[
d(L_{u_m},L_{v_m})=1
\]
for every $m$.

Write
\[
u_m=(X_m,b_m),
\qquad
v_m=(Y_m,c_m).
\]
Since
\[
(u_m,v_m)\to(z,z),
\]
we have
\[
\|b_m-c_m\|\to0.
\]
On the other hand,
\[
p_{u_m}:=\binom{0}{b_m}\in L_{u_m},
\qquad
p_{v_m}:=\binom{0}{c_m}\in L_{v_m}.
\]
Therefore
\[
1=d(L_{u_m},L_{v_m})
\leqslant
\|p_{u_m}-p_{v_m}\|
=
\|b_m-c_m\|\longrightarrow0,
\]
which is impossible.

Thus each Nash piece contains at most one point of $X$. Consequently,
\[
|X|
\leqslant
N
\leqslant
\Sigma(M,s_r,D_r).
\]
We may therefore take
\[
B(k,n,r)
:=
\Sigma\!\left(
(k+1)(n-k),
\binom{(k+1)(n-k)+4(r+1)}{4(r+1)},
4(r+1)
\right).
\]
\end{proof}

\begin{remark}\label{rem:diagonal}
The proof does not use continuity of the affine-subspace distance function. Indeed, that function is not continuous, as shown in Example \ref{ex:discontinuous}. The contradiction uses only the elementary estimate
\[
d(L_{X,b},L_{Y,c})\leqslant\|b-c\|
\]
for pairs converging to the diagonal in one affine Grassmann chart.
\end{remark}

\section{The parallel case}

Before applying Ramsey theory, we isolate the rank-$k$ case.

\begin{proposition}[Parallel bound]\label{prop:parallel}
Let
\[
\mathcal L=\{L_1,\cdots,L_m\}
\]
be affine $k$-planes in $\R^n$ with a common direction $U$, and suppose
\[
d(L_i,L_j)=1
\qquad(i\ne j).
\]
Then
\[
m\leqslant n-k+1.
\]
\end{proposition}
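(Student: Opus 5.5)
The plan is to reduce to an equilateral set of points in the orthogonal complement \(U^\perp\cong\R^{n-k}\), and then invoke the classical bound \(\mathcal N(0,n-k)=n-k+1\) recalled in the introduction.

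First, write each plane as \(L_i=a_i+U\). By the uniqueness argument in the proof of Proposition~\ref{prop:affine-grassmann}, we may choose \(a_i\in U^\perp\). Since all planes share the direction \(U\), we have \(U_i+U_j=U\) for all \(i,j\). Lemma~\ref{lem:projection} then gives
\[
d(L_i,L_j)=\left\|\mathrm{Proj}_{U^\perp}(a_j-a_i)\right\|=\|a_j-a_i\|,
\]
where the last equality holds because \(a_j-a_i\in U^\perp\).

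It follows that the points \(a_1,\cdots,a_m\) are pairwise at distance \(1\) inside the \((n-k)\)-dimensional Euclidean space \(U^\perp\). The distance-one condition also forces \(a_i\ne a_j\), so the \(L_i\) correspond to distinct points, and we obtain an equilateral set of \(m\) points in \(\R^{n-k}\).

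It remains to prove \(m\leqslant (n-k)+1\) for an equilateral set of side \(1\) in \(\R^{d}\) with \(d=n-k\). Translate so that \(a_1=0\) and set \(v_i=a_{i+1}\) for \(i=1,\cdots,m-1\). Then \(\|v_i\|=1\), and \(\|v_i-v_j\|=1\) gives \(\langle v_i,v_j\rangle=\tfrac12\) for \(i\ne j\). The Gram matrix is therefore
\[
\tfrac12\bigl(I_{m-1}+J_{m-1}\bigr),
\]
where \(J_{m-1}\) is the all-ones matrix. Its eigenvalues are \(\tfrac12\) and \(\tfrac m2\), so it is positive definite. Hence the vectors \(v_1,\cdots,v_{m-1}\) are linearly independent in \(\R^d\), which gives \(m-1\leqslant d\), that is, \(m\leqslant n-k+1\).

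There is no real obstacle here. The only points needing care are the choice of orthogonal representatives \(a_i\in U^\perp\), so that the projection in Lemma~\ref{lem:projection} becomes the identity on \(a_j-a_i\), and the Gram matrix computation above.
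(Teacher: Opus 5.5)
Your proof is correct and follows essentially the same route as the paper. You pass to orthogonal representatives $a_i\in U^\perp$, use Lemma~\ref{lem:projection} to get $d(L_i,L_j)=\|a_i-a_j\|$, and bound the resulting equilateral set in $U^\perp\cong\R^{n-k}$ via the positive definite Gram matrix $\tfrac12(I_{m-1}+J_{m-1})$; the only cosmetic differences are that you translate $a_1$ rather than $a_m$ to the origin and write out the eigenvalues explicitly.
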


\begin{proof}
Write each affine plane in its orthogonal form
\[
L_i=a_i+U,
\qquad
a_i\in U^\perp.
\]
By Lemma \ref{lem:projection},
\[
d(L_i,L_j)=\|a_i-a_j\|.
\]
Thus $a_1,\cdots,a_m$ form an equilateral set in the Euclidean space
\[
U^\perp\cong\R^{n-k}.
\]
Translate so that $a_m=0$ and put
\[
v_i=a_i-a_m
\qquad(1\leqslant i<m).
\]
Then
\[
\|v_i\|^2=1
\]
and, for $i\ne j$,
\[
1=\|v_i-v_j\|^2
=2-2v_i\cdot v_j,
\]
so
\[
v_i\cdot v_j=\frac12.
\]
The Gram matrix of $v_1,\cdots,v_{m-1}$ is
\[
\frac12(I_{m-1}+J_{m-1}),
\]
which is positive definite. Hence the vectors are linearly independent and
\[
m-1\leqslant n-k.
\]
\end{proof}

\section{Ramsey reduction, rank descent and explicit bounds}

We now combine the fixed-rank theorem with finite Ramsey theory.

For integers $q,t\geqslant1$, let
\[
R_q(t)
\]
denote the diagonal $q$-color Ramsey number: every edge-coloring of the complete graph on $R_q(t)$ vertices with $q$ colors contains a monochromatic complete subgraph on $t$ vertices.

\begin{theorem}[Quantitative finite bound]\label{thm:quantitative}
Let $0<k<n$, put
\[
r_*:=\min(2k,n-1),
\qquad
q:=r_*-k+1,
\]
and define
\[
B_*:=
\max_{k<r\leqslant r_*}B(k,n,r),
\]
with the convention $B_*=0$ if the index set is empty. Let
\[
T:=1+\max\{n-k+1,B_*\}.
\]
Then
\[\boxed{
\mathcal{N}(k,n)
\leqslant
\binom nk\bigl(R_q(T)-1\bigr).}
\]
In particular,
\[
\mathcal{N}(k,n)<\infty.
\]
\end{theorem}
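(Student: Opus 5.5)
The plan is a pigeonhole-over-charts step followed by a Ramsey step, with Proposition~\ref{prop:parallel} and Theorem~\ref{thm:fixed-rank} bounding the monochromatic cliques.

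\textbf{Step 1 (pigeonhole over charts).} Let $\mathcal L$ be a finite equidistant family of affine $k$-planes, and suppose for contradiction that $|\mathcal L|\geqslant\binom nk\bigl(R_q(T)-1\bigr)+1$. The $\binom nk$ lifted standard charts cover $\Graff(k,n)$, so each $L\in\mathcal L$ can be assigned to one chart containing it. By pigeonhole, some single chart then contains a subfamily $X$ with $|X|\geqslant R_q(T)$. Identify $X$ with points of $\R^M$ in that chart; after permuting coordinates, this is exactly the setting of Theorem~\ref{thm:fixed-rank}. A supremum that is attained or not is handled the same way: every finite family satisfies the bound, so the supremum does too.

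\textbf{Step 2 (Ramsey coloring).} Color each pair $\{x,y\}\subset X$ by $\dim(U_x+U_y)$. Since the pairwise distance is $1>0$, Corollary~\ref{cor:positive-rank} forces this dimension to be at most $n-1$. It is also at most $2k$, and at least $k$. So the color lies in $\{k,\dots,r_*\}$, which is a palette of $q$ colors. By the definition of $R_q(T)$, there is a monochromatic subset $Y\subset X$ with $|Y|=T$, say of color $r$.

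\textbf{Step 3 (case analysis on the color).}
\begin{itemize}
\item If $r=k$, then for distinct $x,y\in Y$ we have $U_x+U_y$ of dimension $k$, which forces $U_x=U_y$. So all planes in $Y$ share one direction, and Proposition~\ref{prop:parallel} gives $T\leqslant n-k+1<T$, a contradiction.
\item If $k<r\leqslant r_*$, then Theorem~\ref{thm:fixed-rank} applies to $Y$ inside the one chart and gives $T\leqslant B(k,n,r)\leqslant B_*<T$, again a contradiction.
\end{itemize}
Hence $|\mathcal L|\leqslant\binom nk\bigl(R_q(T)-1\bigr)$. Finiteness then follows because $R_q(T)$ is finite by Ramsey's theorem and $B_*$ is finite by Theorem~\ref{thm:fixed-rank}.

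\textbf{Main obstacle.} Most of this is bookkeeping. The two points needing care are the following.
\begin{itemize}
\item A plane may lie in several charts. Fixing one chart per plane makes the assignment a function, so the pigeonhole step is valid.
\item The edge cases need conventions. When $T$ is small or $q=1$ (for example $n=k+1$, where only color $k$ occurs), the argument must still run, which requires $R_1(T)=T$ and $R_q(T)\geqslant T$. If $B_*=0$ because the index set is empty, then $T=n-k+2$ and only the parallel case arises.
\end{itemize}
The case $k=0$ is excluded by the hypothesis $0<k$. I would also check that Theorem~\ref{thm:fixed-rank} requires both exact rank $r$ and distance $1$ for every pair, and that both hold in $Y$ by construction.
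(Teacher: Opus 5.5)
Your proposal is correct and follows the paper's proof essentially step for step. You use pigeonhole over the $\binom nk$ standard charts, then a $q$-coloring of pairs by $\dim(U_x+U_y)$ with Ramsey's theorem, and then rule out the monochromatic color via Proposition~\ref{prop:parallel} when $r=k$ and Theorem~\ref{thm:fixed-rank} when $r>k$; your extra edge-case remarks (e.g.\ $q=1$ when $n=k+1$) are harmless bookkeeping that the paper leaves implicit.
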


\begin{proof}
Let
\[
\mathcal L=\{L_1,\cdots,L_m\}
\]
be a family of affine $k$-planes satisfying
\[
d(L_i,L_j)=1
\qquad(i\ne j).
\]
Assign each $L_i$ to one of the standard affine Grassmann charts containing it. There are \(\binom nk\) such charts. If
\[
m>\binom nk\cdot\bigl(R_q(T)-1\bigr),
\]
then by the pigeonhole principle one standard chart contains at least
\[
R_q(T)
\]
members of the family. Restrict to those members.

For each pair $i\ne j$, let $U_i,U_j$ denote the direction subspaces and color the edge $\{i,j\}$ by
\[
r_{ij}:=\dim(U_i+U_j).
\]
Since $\dim U_i=\dim U_j=k$,
\[
r_{ij}\geqslant k.
\]
Since $d(L_i,L_j)=1>0$, Corollary \ref{cor:positive-rank} gives
\[
r_{ij}\leqslant n-1,
\]
also \(r_{ij}\leqslant2k\). Hence
\[
r_{ij}\in\{k,k+1,\cdots,r_*\},
\]
so there are exactly $q$ possible colors.

By the definition of $R_q(T)$, there is a subfamily
\[
\mathcal L'=\{L_{i_1},\cdots,L_{i_T}\}
\]
of size $T$ for which
\[
\dim(U_{i_a}+U_{i_b})=r
\]
is constant for all $a\ne b$.

If $r=k$, then
\[
\dim(U_{i_a}+U_{i_b})=k
\]
forces
\[
U_{i_a}=U_{i_b}
\]
for every pair. Thus all members of $\mathcal L'$ are parallel. By Proposition \ref{prop:parallel},
\[
T\leqslant n-k+1,
\]
contradicting the definition of $T$.

If $r>k$, then all members lie in the same affine Grassmann chart and satisfy the hypotheses of Theorem \ref{thm:fixed-rank}. Therefore
\[
T\leqslant B(k,n,r)\leqslant B_*,
\]
again contradicting the definition of $T$.

Thus
\[
m\leqslant\binom nk\bigl(R_q(T)-1\bigr),
\]
which proves the theorem.
\end{proof}

The preceding theorem already establishes the finiteness of \(\mathcal N(k,n)\) by combining the fixed-rank bound with finite Ramsey theory. However, the resulting estimate still involves the abstract constants \(B(k,n,r)\) and is therefore not fully explicit. We now refine the argument by replacing the Ramsey reduction with a rank-descent procedure. Using the same polynomial relations \(H_\rho\), together with explicit complexity bounds for the associated Nash decompositions, we obtain a completely explicit double-exponential upper bound.

\begin{lemma}[Rank-descent recursion]
\label{lem:recursion}
Fix integers
\[
1\leqslant k\leqslant n-2,
\]
and put
\[
r_*:=\min(2k,n-1),
\qquad
M:=(k+1)(n-k).
\]
For every integer
\(\nu\ (
k\leqslant \nu\leqslant r_*)
\),
let \(A_\nu=A_\nu(k,n)\) denote the supremum of the
cardinalities of finite families \(X\) contained in one fixed
standard affine Grassmann chart and satisfying
\[
d(L_x,L_y)=1,
\qquad
\dim(U_x+U_y)\leqslant \nu
\]
for all distinct \(x,y\in X\). Fix an integer
\(\rho\ (
k<\rho\leqslant r_*)
\).
Set
\[
D_\rho:=4(\rho+1),
\qquad
\beta_\rho:=2D_\rho=8(\rho+1),
\]
and
\[
E_\rho
:=
\left[
\beta_\rho(\beta_\rho+1)
\right]^{24^M},\qquad
L_\rho
:=
2^{M+1}E_\rho^{M^2},
\qquad
\delta_\rho
:=
2^M(\beta_\rho+1)E_\rho^{M-2}.
\]
Finally, define
\[
\Sigma_\rho
:=
L_\rho\,
\delta_\rho(2\delta_\rho-1)^{M-1}
\sum_{j=0}^{M}
\binom{2^M}{j}4^j.
\]
Then
\begin{equation}
\label{eq:rank-descent-recursion}
\boxed{A_\rho
\leqslant
\Sigma_\rho A_{\rho-1}.}
\end{equation}
\end{lemma}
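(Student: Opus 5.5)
Let $X$ be a finite family in one standard chart with $d(L_x,L_y)=1$ and $\dim(U_x+U_y)\leqslant\rho$ for all distinct $x,y\in X$. The plan is to run the argument of Theorem~\ref{thm:fixed-rank} with $H_\rho$, but to replace the conclusion ``each Nash piece contains at most one point'' by ``each Nash piece contains a subfamily of rank at most $\rho-1$''.

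\textbf{Step 1 (propagation).} First I show that $H_\rho$ vanishes on $X\times X$. For a distinct pair of exact rank $\rho$ this is Proposition~\ref{prop:H-properties}(i). For a distinct pair of rank $<\rho$, both $\Delta_\rho(A)$ and $\Delta_{\rho+1}(A)$ vanish. Also $\rank[A,w]\leqslant\rank A+1\leqslant\rho$, so $\Delta_{\rho+1}([A,w])=0$. Hence $F_\rho=R_\rho=0$. On the diagonal we use part (iii).

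Lemma~\ref{lem:propagation} then gives $H_\rho\equiv0$ on $\overline X^{\,H_\rho}\times\overline X^{\,H_\rho}$. By Lemma~\ref{lem:finite-defining}, the closure $\overline X^{\,H_\rho}$ is cut out by polynomials of degree $\leqslant D_\rho$. I would prefer a single equation: replace $P_1,\dots,P_s$ by $P=\sum P_i^2$, of degree $\leqslant 2D_\rho=\beta_\rho$. This is the natural source of $\beta_\rho$ in the statement.

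\textbf{Step 2 (explicit decomposition).} Next I decompose $Z(P)\subset\R^M$ into connected Nash pieces with an explicit count $\leqslant\Sigma_\rho$. I would make Theorem~\ref{thm:nash-stratification} quantitative.

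The Gabrielov--Vorobjov stratification of the single equation $P=0$ should produce at most $L_\rho=2^{M+1}E_\rho^{M^2}$ strata. Each stratum is described by polynomials of degree at most $\delta_\rho=2^M(\beta_\rho+1)E_\rho^{M-2}$, where $E_\rho$ is the tower-type degree growth $[\beta_\rho(\beta_\rho+1)]^{24^M}$ coming from iterated discriminants/derivatives in their construction. There are at most $2^M$ polynomials per stratum.

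The Basu--Pollack--Roy bound on the total number of connected components of all sign conditions of $s'\leqslant 2^M$ polynomials of degree $\leqslant\delta$ on a variety of dimension $\leqslant M$ has the form
\[
\sum_{j\leqslant M}\binom{s'}{j}4^j\,\delta(2\delta-1)^{M-1}.
\]
Multiplying the two bounds gives $\Sigma_\rho$.

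I expect this bookkeeping to be the main obstacle: matching the exact constants in the cited theorems, in particular the format of the strata and the degree bounds, to the form $L_\rho,\delta_\rho,E_\rho$. My first attempt would be to quote GV Section~6 with the parameters (number of polynomials $=1$, degree $\beta_\rho$, dimension $M$) and then read off the output exactly.

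\textbf{Step 3 (descent inside a piece).} Fix a connected Nash piece $S$. I claim every pair of distinct $x,y\in X\cap S$ has rank $\leqslant\rho-1$.

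Suppose instead that some distinct pair $x,y\in S$ has $\Delta_\rho(A(x,y))>0$. Then $G=\Delta_\rho(A(u,v))$ is a real-analytic function on the connected manifold $S\times S$ that does not vanish identically. By Lemma~\ref{lem:analytic-zero}, there are $(u_m,v_m)\to(x,x)$ in $S\times S$ with $G(u_m,v_m)>0$. Since $H_\rho(u_m,v_m)=0$, Proposition~\ref{prop:H-properties}(ii) gives $d(L_{u_m},L_{v_m})=1$. But Remark~\ref{rem:diagonal} gives
\[
d(L_{u_m},L_{v_m})\leqslant\|b_m-c_m\|\to0,
\]
a contradiction. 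Hence $X\cap S$ is an admissible family for $\nu=\rho-1$, and $|X\cap S|\leqslant A_{\rho-1}$.

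\textbf{Step 4 (conclusion).} Summing over the at most $\Sigma_\rho$ pieces gives $|X|\leqslant\Sigma_\rho A_{\rho-1}$. Taking the supremum over $X$ yields \eqref{eq:rank-descent-recursion}.
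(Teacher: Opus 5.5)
Your proposal is correct and follows the paper's proof essentially step for step. It uses the same vanishing of $H_\rho$ on lower-rank pairs via $\rank[A,w]\leqslant\rank A+1\leqslant\rho$, the same single equation $P=\sum P_i^2$ of degree $\leqslant\beta_\rho$, the same Gabrielov--Vorobjov stratification followed by the Basu--Pollack--Roy count with at most $2^M$ strict inequalities per stratum, and the same diagonal contradiction inside each connected Nash piece; the paper additionally treats $s=0$ (where $\overline X^{\,H_\rho}=\R^M$ is a single piece) separately, which your formulation covers trivially with $P\equiv 0$.
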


\begin{proof}
    Let \(X\) be a finite family contained in the fixed standard
affine Grassmann chart and satisfying
\[
d(L_x,L_y)=1,
\qquad
\dim(U_x+U_y)\leqslant\rho
\]
for all distinct \(x,y\in X\). If \(X=\varnothing\), there is nothing to prove. Hence assume \(X\ne\varnothing\). We first show that
\[
H_\rho(x,y)=0
\qquad
(x,y\in X).
\]
If
\[
\dim(U_x+U_y)=\rho,
\]
then this follows from
Proposition~\ref{prop:H-properties}\textnormal{(i)}. Suppose instead that
\[
\dim(U_x+U_y)\leqslant\rho-1.
\]
Then
\[
\rank A(x,y)\leqslant\rho-1,
\]
and hence
\[
\Delta_\rho(A(x,y))=0,\qquad
\Delta_{\rho+1}(A(x,y))=0.
\]
Moreover, adjoining one column can increase the rank by at most one,
so
\[
\rank[A(x,y),w(x,y)]
\leqslant
\rank A(x,y)+1
\leqslant\rho.
\]
It follows that
\[
\Delta_{\rho+1}([A(x,y),w(x,y)])=0.
\]
Consequently,
\[
F_\rho(x,y)=0,
\qquad
R_\rho(x,y)=0,
\]
and therefore
\[
H_\rho(x,y)=0.
\]
The diagonal identity
\(
H_\rho(x,x)=0
\)
follows from
Proposition~\ref{prop:H-properties}\textnormal{(iii)}. Thus
\[
H_\rho(x,y)=0
\qquad
(x,y\in X).
\]
By Lemma~\ref{lem:propagation},
\[
H_\rho(u,v)=0
\qquad
(u,v\in \overline X^{\,H_\rho}).
\]
By Lemma~\ref{lem:finite-defining}, there exist polynomials
\(
P_1,\cdots,P_s
\)
of degree at most \(D_\rho\) such that
\[
\overline X^{\,H_\rho}=V(P_1,\cdots,P_s).
\]

If \(s=0\), then
\[
\overline X^{\,H_\rho}=\R^M.
\]
In this case \(\overline X^{\,H_\rho}\) itself is a connected Nash submanifold. The
diagonal argument given below, applied with \(S=\overline X^{\,H_\rho}\), shows that no
two points \(x,y\in X\) can satisfy
\[
\dim(U_x+U_y)=\rho.
\]
Hence all pairs in \(X\) have direction-sum dimension at most
\(\rho-1\), so
\[
|X|\leqslant A_{\rho-1}
\leqslant
\Sigma_\rho A_{\rho-1}.
\]
We may therefore assume
\(
s\geqslant1
\). Set
\[
P:=P_1^2+\cdots+P_s^2.
\]
Since the ground field is \(\R\), we have
\[
\overline X^{\,H_\rho}=V(P),\qquad
\deg P
\leqslant
2D_\rho
=
\beta_\rho.
\]

We now apply the explicit stratification procedure of Gabrielov-Vorobjov to the elementary semialgebraic set
\[
\overline X^{\,H_\rho}=\{P=0\}.
\]
In their notation, we may take
\[
I=1,
\qquad
J=0,
\qquad
r=0,
\qquad
\alpha=1,
\qquad
\beta=\beta_\rho.
\]
By
\cite[Lemma~5, Theorem~2, and Remark~2]{GabrielovVorobjov},
all auxiliary degree parameters and derivative-order parameters
produced by the procedure are bounded by
\[
\left[
\beta_\rho(\beta_\rho+1)
\right]^{24^M}
=
E_\rho.
\]

The counting estimate in the proof of
\cite[Theorem~2]{GabrielovVorobjov} bounds the number of strata
arising from the nonzero sequences by
\[
2^M E_\rho^{M^2}.
\]
Allowing also for the additional open stratum occurring in the
algorithm, the total number of smooth semialgebraic strata is
therefore bounded by
\[
L_\rho
=
2^{M+1}E_\rho^{M^2}.
\]

We next bound the ordinary degrees of all dense polynomials appearing
in the descriptions of these strata. Since the input polynomial has
degree at most \(\beta_\rho\), we use
\[
d:=\beta_\rho+1
\]
in order to conform to the convention in
\cite{GabrielovVorobjov} that the input degrees are strictly smaller
than \(d\). By
\cite[Remark~4]{GabrielovVorobjov}, the degrees of all output
polynomials are strictly smaller than
\[
2^{M-1}d\,M_M^{M-2},
\]
where \(M_M\) denotes the final derivative-order parameter in the
algorithm. Since
\(
M_M\leqslant E_\rho,
\)
every output polynomial has degree at most
\[
\delta_\rho
=
2^M(\beta_\rho+1)E_\rho^{M-2}.
\]

Furthermore, Theorem~2 of Gabrielov-Vorobjov \cite{GabrielovVorobjov} shows that, since
\(J=0\), every stratum is described using at most \(2^M\) strict polynomial inequalities.

Let \(T\) be one of these smooth strata. Let
\(
\mathcal Q_T
\)
denote the family of polynomial equations occurring in a description
of \(T\), and let
\(
\mathcal P_T
\)
denote the family of polynomials occurring in its strict
inequalities. Then
\[
\#\mathcal P_T\leqslant2^M
\]
and
\[
\deg Q,\deg R\leqslant\delta_\rho,
\qquad \text{for every}\ Q\in\mathcal Q_T,
R\in\mathcal P_T.
\]
The stratum \(T\) is a realization of a sign condition of
\(\mathcal P_T\) on the real algebraic set
\(
V(\mathcal Q_T).
\)

Let \(k_T:=\dim_{\R}V(\mathcal Q_T)\). then \(k_T\leqslant M\). By
\cite[Theorem~1.1]{BPRsign}, applied with \(i=0\),
\[
\sum_{\sigma}
b_0\bigl(\mathcal R(\sigma,V(\mathcal Q_T))\bigr)
\leqslant
\delta_\rho(2\delta_\rho-1)^{M-1}
\sum_{j=0}^{k_T}
\binom{\#\mathcal P_T}{j}4^j.
\]
Since \(k_T\leqslant M, \#\mathcal P_T\leqslant2^M\), the right-hand side is at most
\[
\delta_\rho(2\delta_\rho-1)^{M-1}
\sum_{j=0}^{M}
\binom{2^M}{j}4^j.
\]
In particular, \(T\) has at most this many connected components.

For semialgebraic sets, the semialgebraically connected components
coincide with the ordinary connected components. Moreover, every
connected component of a smooth semialgebraic stratum is a connected
Nash submanifold. It follows that \(\overline X^{\,H_\rho}\) admits a partition into at
most
\[
\Sigma_\rho
=
L_\rho\,
\delta_\rho(2\delta_\rho-1)^{M-1}
\sum_{j=0}^{M}
\binom{2^M}{j}4^j
\]
connected Nash submanifolds.

Let \(S\) be one of these connected Nash pieces. We claim that
\[
\dim(U_x+U_y)\leqslant\rho-1
\]
for all distinct
\(
x,y\in X\cap S.
\)
Suppose, to the contrary, that
\(
x,y\in X\cap S
\)
and
\[
\dim(U_x+U_y)=\rho.
\]
Consider the polynomial function
\[
G(u,v):=\Delta_\rho(A(u,v))
\]
on the connected real-analytic manifold
\(
S\times S
\). Since
\[
G(x,y)>0,
\]
the function \(G\) is not identically zero on \(S\times S\). By
Lemma~\ref{lem:analytic-zero}, its nonzero locus is dense in
\(S\times S\). Thus, for any \(z\in S\), there exists a sequence
\[
(u_m,v_m)\longrightarrow(z,z)
\]
in \(S\times S\) such that
\[
\Delta_\rho(A(u_m,v_m))>0
\]
for every \(m\). Since \(u_m,v_m\in \overline X^{\,H_\rho}\), we have
\[
H_\rho(u_m,v_m)=0.
\]
By
Proposition~\ref{prop:H-properties}\textnormal{(ii)}, it follows
that
\[
\rank A(u_m,v_m)=\rho,\qquad
d(L_{u_m},L_{v_m})=1
\]
for every \(m\). Writing
\[
u_m=(X_m,b_m),
\qquad
v_m=(Y_m,c_m),
\]
we have
\[
\|b_m-c_m\|\longrightarrow0.
\]
Since
\[
\binom{0}{b_m}\in L_{u_m},\qquad
\binom{0}{c_m}\in L_{v_m},
\]
it follows that
\[
1
=
d(L_{u_m},L_{v_m})
\leqslant
\|b_m-c_m\|
\longrightarrow0,
\]
which is impossible.

Hence every set \(X\cap S\) satisfies
\[
\dim(U_x+U_y)\leqslant\rho-1
\]
for all distinct \(x,y\in X\cap S\). By the definition of
\(A_{\rho-1}\),
\[
|X\cap S|
\leqslant
A_{\rho-1}.
\]
Since there are at most \(\Sigma_\rho\) connected Nash pieces,
\[
|X|
\leqslant
\Sigma_\rho A_{\rho-1}.
\]
Taking the supremum over all such \(X\) proves
\eqref{eq:rank-descent-recursion}.
\end{proof}

\begin{theorem}[A fully explicit upper bound]
\label{thm:fully-explicit-upper-bound}
For all integers \(0\leqslant k<n\), one has
\[\boxed{
\mathcal N(k,n)
\leqslant
2^{\,2^{\,10(k+1)(n-k)}}.}
\]
In particular, the upper bound is completely explicit and depends
only on \(k\) and \(n\).
\end{theorem}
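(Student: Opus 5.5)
The proof will handle the degenerate cases directly and, for the remaining ones, iterate the rank-descent recursion of Lemma~\ref{lem:recursion} over the charts. If $k=0$, then $\mathcal N(0,n)=n+1$ by Proposition~\ref{prop:parallel} with $U=\{0\}$. If $k=n-1$, every two distinct non-parallel hyperplanes intersect, so an equidistant family is parallel and Proposition~\ref{prop:parallel} gives at most $2$. Both values are far below $2^{2^{10(k+1)(n-k)}}$. So assume $1\leqslant k\leqslant n-2$ and put $M=(k+1)(n-k)\geqslant 4$.

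Within a fixed chart, the base case is $A_k\leqslant n-k+1$ by Proposition~\ref{prop:parallel}. Iterating \eqref{eq:rank-descent-recursion} from $\rho=k+1$ up to $r_*$ gives
\[
A_{r_*}\leqslant (n-k+1)\prod_{\rho=k+1}^{r_*}\Sigma_\rho .
\]
Every pair in an equidistant family lies in some rank $\leqslant r_*$ (Corollary~\ref{cor:positive-rank}), so by pigeonholing over the $\binom nk$ charts,
\[
\mathcal N(k,n)\leqslant \binom nk (n-k+1)\prod_{\rho=k+1}^{r_*}\Sigma_\rho .
\]
It remains to show that the logarithm of the right-hand side is below $2^{10M}$.

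The main work is bounding $\log_2\Sigma_\rho$ crudely but uniformly. First, since $\rho+1\leqslant n\leqslant M$, we have $\beta_\rho(\beta_\rho+1)\leqslant 128M^2\leqslant 2^{7+2\log_2 M}\leqslant 2^{5M}$. This gives $\log_2E_\rho\leqslant 5M\cdot 24^M$. From this,
\[
\log_2 L_\rho\leqslant M+1+M^2\cdot 5M\cdot 24^M,
\qquad
\log_2\delta_\rho\leqslant M+\log_2(\beta_\rho+1)+(M-2)\,5M\,24^M .
\]
The Betti-number factor $\delta_\rho(2\delta_\rho-1)^{M-1}$ is at most $2^{M-1}\delta_\rho^M$, so its logarithm is $O(M^3 24^M)$. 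The binomial sum is at most $(M+1)2^{M^2}4^M$, which is negligible. Collecting terms,
\[
\log_2\Sigma_\rho\leqslant cM^3 24^M
\]
with a small constant $c$ (roughly $10$). Multiplying over the at most $k\leqslant M$ values of $\rho$ and adding $\log_2\bigl(\binom nk(n-k+1)\bigr)\leqslant 2M$ gives
\[
\log_2\mathcal N(k,n)\leqslant 11M^4 24^M .
\]

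The main obstacle is this last comparison with $2^{10M}$, since $24^M=2^{M\log_2 24}\approx 2^{4.585M}$ and the polynomial prefactors have to be absorbed. It suffices to check $11M^4\leqslant 2^{5.4M}$ for $M\geqslant 4$, which is immediate: at $M=4$ the left side is $2816$ and the right side exceeds $2^{21}$, and the ratio only grows with $M$. I expect the fiddly part to be applying the Gabrielov--Vorobjov parameters consistently, for instance bounding $\log_2(\beta_\rho+1)$ by $M$ and keeping the exponent $M^2$ on $E_\rho$ in $L_\rho$. I will keep every estimate in the form ``$\log_2$ of each factor $\leqslant$ polynomial in $M$ times $24^M$'', so there is ample slack between $24^M$ and $2^{10M}$ to absorb any constant-factor slips.
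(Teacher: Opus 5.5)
Your proposal is correct and follows the paper's proof essentially step for step: the degenerate cases \(k=0\) and \(k=n-1\) via the parallel bound, the base case \(A_k\leqslant n-k+1\), iteration of the rank-descent recursion, pigeonholing over the \(\binom{n}{k}\) charts (the paper also notes explicitly that the per-chart bound is uniform under coordinate permutations), and then a crude estimate of \(\log_2\Sigma_\rho\). The only difference is bookkeeping: you keep the factor \(24^M\) and compare \(11M^4\,24^M\) with \(2^{10M}\) at the end, whereas the paper uses \(\log_2(72M^2)\leqslant 2^M\) early to get \(E_\rho\leqslant 2^{2^{6M}}\) and \(\Sigma_\rho\leqslant 2^{2^{8M}}\); both chains of inequalities check out.
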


\begin{proof}
The cases \(k=0\) and \(k=n-1\) are classical. It therefore remains to consider
\[
1\leqslant k\leqslant n-2.
\]

Put
\[
r_*:=\min(2k,n-1),\qquad
M:=(k+1)(n-k).
\]
Since
\(
M-n
=
(k+1)(n-k)-n
=
k(n-k-1)
\geqslant0
\),
we have \(M\geqslant n\). Moreover, \(M\geqslant4\).

All standard affine Grassmann charts are carried to one another by
permutations of the ambient coordinates. Since coordinate
permutations are orthogonal transformations, they preserve both the
separation distance and the dimensions of sums of direction
subspaces. Consequently, all the chartwise bounds obtained below are
uniform over the standard charts.

For integer \(\rho\ (k\leqslant\rho\leqslant r_*)\), let \(A_\rho=A_\rho(k,n)\) be as in
Lemma~\ref{lem:recursion}. By
Proposition~\ref{prop:parallel},
\[
A_k\leqslant n-k+1.
\]
Moreover, for every integer \(\rho\ (k<\rho\leqslant r_*)\), Lemma~\ref{lem:recursion} gives
\[
A_\rho
\leqslant
\Sigma_\rho A_{\rho-1}.
\]
Iterating the recursion from \(\rho=k+1\) to \(\rho=r_*\), we obtain
\[
A_{r_*}
\leqslant
(n-k+1)
\prod_{\rho=k+1}^{r_*}
\Sigma_\rho.
\]
There are \(\binom nk\) standard affine Grassmann charts. Assigning
each member of a finite unit-distance family to one standard chart
containing it therefore gives
\begin{equation}
\label{eq:explicit-product-bound}
\mathcal N(k,n)
\leqslant
\binom nk
(n-k+1)
\prod_{\rho=k+1}^{r_*}
\Sigma_\rho.
\end{equation}

It remains to simplify this explicit product. Since
\(
\rho+1
\leqslant
r_*+1
\leqslant
n
\leqslant
M
\),
we have
\[
\beta_\rho
=
8(\rho+1)
\leqslant
8M
\]
and consequently
\[
\beta_\rho(\beta_\rho+1)
\leqslant
8M(8M+1)
\leqslant
72M^2.
\]
For \(M\geqslant4\),
\(
\log_2(72M^2)\leqslant2^M
\). Hence
\[
\log_2E_\rho
\leqslant
24^M\log_2(72M^2)
\leqslant
24^M2^M
=
48^M
\leqslant
2^{6M}\Longrightarrow
E_\rho
\leqslant
2^{\,2^{6M}}.
\]

Next,
\begin{align*}
\log_2\delta_\rho
=
M+\log_2(\beta_\rho+1)
+(M-2)\log_2E_\rho\leqslant
3M+M2^{6M}.
\end{align*}
Also,
\[
\sum_{j=0}^{M}
\binom{2^M}{j}4^j
\leqslant
\sum_{j=0}^{2^M}
\binom{2^M}{j}4^j
=
5^{2^M}.
\]
It follows that, for $M\geqslant4$
\begin{align*}
\log_2\Sigma_\rho
&\leqslant
(M+1)
+
M^2\log_2E_\rho
+
\log_2\delta_\rho
+
(M-1)\log_2(2\delta_\rho)
+
2^M\log_2 5\\
&\leqslant
2M^2\,2^{6M}
+
3M^2
+
2M
+
3\cdot2^M\\
&\leqslant3M^22^{6M}\leqslant2^{8M}.
\end{align*}
Hence
\[
\Sigma_\rho
\leqslant
2^{\,2^{8M}}.
\]

Finally, the number of factors in
\eqref{eq:explicit-product-bound} is
\(
r_*-k\leqslant M
\).
Moreover,
\[
\binom nk
\leqslant
2^n
\leqslant
2^M,\qquad
n-k+1
\leqslant
n+1
\leqslant
2^n
\leqslant
2^M.
\]
Consequently,
\[
\binom nk(n-k+1)
\leqslant
2^{2M}.
\]
It follows that, for $M\geqslant4$
\[
\log_2\mathcal N(k,n)
\leqslant
2M+M2^{8M}
\leqslant
2^{10M}\Longrightarrow
\mathcal N(k,n)
\leqslant
2^{\,2^{10M}}
=
2^{\,2^{\,10(k+1)(n-k)}}.
\]
This completes the proof.
\end{proof}

\begin{proof}[Proof of Theorem \ref{thm:main}]
It suffices to consider finite families. Indeed, an infinite pairwise unit-distance family would contain finite subfamilies of arbitrarily large cardinality. For $0\leqslant k<n$, apply Theorem \ref{thm:fully-explicit-upper-bound}.
\end{proof}

\section{Further remarks and open problems}

The explicit upper bound obtained above is enormous and is not
expected to be close to sharp. Several natural problems remain.

\begin{question}
For fixed $k$, what is the correct order of growth of $\mathcal{N}(k,n)$ as $n\to\infty$?
\end{question}

\begin{question}
Determine $\mathcal{N}(k,n)$ exactly in the first genuinely higher-dimensional cases, such as $(k,n)=(2,4)$ and $(2,5)$.
\end{question}

\section*{Declaration of competing interests}
The author declares that he has no known competing interests.

\section*{Funding}
The author received no specific funding for this work.

\section*{Acknowledgments} The author would like to thank Professors Liang Xiao, Qingchun Tian, and Binyong Xie for their invaluable support and assistance throughout this research, and also extends appreciation to the School of Mathematical Sciences at Peking University for providing a pleasant working environment.

\begin{flushright}
			\begin{minipage}{148mm}\sc\footnotesize
				J.\,L.: School of Mathematical Sciences, Peking University, Beijing, China \\
				{\it E-mail address}: \href{mailto:jialimath001@pku.org.cn}{{\tt jialimath001@pku.org.cn}} \vspace*{3mm}
			\end{minipage}
		\end{flushright}

\end{document}